\newcommand{\N}{{\mathbb N}}
\newcommand{\Z}{{\mathbb Z}}
\newcommand{\R}{{\mathbb R}}
\newcommand{\kk}{\Bbbk}
\newcommand{\M}{\mathbf{M}}
\newcommand{\A}{\mathcal{W}_{n-1}}
\newcommand{\ZZ}{\mathcal{Z}}
\newcommand{\inj}{{\rm inj}}
\newcommand{\vol}{{\rm vol}}
\newcommand{\length}{{\rm length}}
\newcommand{\cf}{{\it cf.}}
\newcommand{\ie}{{\it i.e.}}
\numberwithin{equation}{section}
\newtheorem{theorem}{Theorem}[section]
\newtheorem{proposition}[theorem]{Proposition}
\newtheorem{corollary}[theorem]{Corollary}
\newtheorem{lemma}[theorem]{Lemma}
\theoremstyle{definition}
\newtheorem{definition}[theorem]{Definition}
\newtheorem{remark}[theorem]{Remark}
\long\def\forget#1\forgotten{} %
\title[Volume of minimal hypersurfaces]{Volume of minimal hypersurfaces in manifolds with nonnegative Ricci curvature}
\author[S.~Sabourau]{St\'ephane Sabourau}
\address{Universit\'e Paris-Est,
Laboratoire d'Analyse et Math\'ematiques Appliqu\'ees (UMR 8050), 
UPEC, UPEMLV, CNRS, F-94010, Cr\'eteil, France}
\email{stephane.sabourau@u-pec.fr}
\subjclass[2010]
{Primary 53C20; 
Secondary 53A10}
\keywords{Minimal hypersurface, width, sweep-out, min-max principle, Almgren-Pitts theory, isoperimetric inequality, nonnegative Ricci curvature}
\begin{document}

\begin{abstract}
We establish a min-max estimate on the volume width of a closed Riemannian manifold with nonnegative Ricci curvature.
More precisely, we show that every closed Riemannian manifold with nonnegative Ricci curvature admits a PL Morse function whose level set volume is bounded in terms of the volume of the manifold.
As a consequence of this sweep-out estimate, there exists an embedded, closed (possibly singular) minimal hypersurface whose volume is bounded in terms of the volume of the manifold.
\end{abstract}

\maketitle


\section{Introduction}

In this article, we show the following result.

\begin{theorem} \label{theo:0}
Let $M$ be a closed Riemannian $n$-manifold with nonnegative Ricci curvature.
There exists an embedded closed minimal hypersurface~$S$ in~$M$ with a singular set of Hausdorff dimension at most~$n-8$ such that
\[
\vol_{n-1}(S) \leq C_n \, \vol_n(M)^{\frac{n-1}{n}}
\]
where $C_n$ is an explicit positive constant depending only on~$n$.
\end{theorem}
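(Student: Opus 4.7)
The plan is to reduce the theorem to the sweep-out estimate announced in the abstract and then invoke standard min-max machinery. More precisely, if one can exhibit a PL Morse function $f : M \to [0,1]$ such that $\vol_{n-1}(f^{-1}(t)) \leq C_n \, \vol_n(M)^{(n-1)/n}$ for every regular value~$t$, then the Almgren--Pitts min-max theorem produces a stationary integral $(n-1)$-varifold whose mass does not exceed the maximum slice volume of the sweep-out, and the Pitts / Schoen--Simon regularity theory guarantees that its support is a smooth embedded minimal hypersurface away from a singular set of Hausdorff dimension at most~$n-8$. The whole content of the theorem is thus concentrated in the construction of~$f$.

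To build $f$, I would use the nonnegativity of the Ricci curvature only through the Bishop--Gromov volume comparison theorem. Rescaling so that $\vol_n(M) = 1$, choose a maximal $r$-separated net $\{x_1, \dots, x_N\} \subset M$ at a small universal scale~$r$. Bishop--Gromov then yields two key facts: the disjoint balls $B(x_i, r/2)$ have volume at most $\omega_n (r/2)^n$, so their number is controlled, $N \leq C_n'$; and the spherical area bound $\vol_{n-1}(\partial B(x_i, \rho)) \leq n \omega_n \rho^{n-1}$ holds for almost every $\rho \in (0, 2r)$, while the enlarged balls $B(x_i, 2r)$ already cover~$M$. The candidate $f$ is then a generic PL interpolation adapted to this cover (for instance, a suitable combination of truncated distance functions to the~$x_i$'s), so that each level set lies in a finite union of at most $N$ geodesic spheres of radius $\leq 2r$, and therefore has $(n-1)$-volume bounded by $N \cdot n \omega_n (2r)^{n-1} \leq C_n \vol_n(M)^{(n-1)/n}$.

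The main obstacle I anticipate is technical rather than conceptual: one must choose the PL interpolation carefully so that $f$ is genuinely Morse and its level sets remain transverse to the spheres $\partial B(x_i, \rho)$ for generic parameter values; one must verify that the associated discrete family of $(n-1)$-cycles is admissible in the Almgren--Pitts sense, i.e.\ detects the fundamental class of~$M$ so that the min-max width is positive and realized by a nontrivial stationary varifold; and one must control the mass along interpolations between consecutive parameter values via the usual density and discretization lemmas of the Almgren--Pitts framework, without inflating the slice volume bound. None of these steps requires more from the geometry than Bishop--Gromov; everything else is black-boxed from the existing min-max and regularity theory.
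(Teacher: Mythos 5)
Your reduction of the theorem to a sweep-out estimate, followed by the Almgren--Pitts min-max theorem and the Pitts/Schoen--Simon regularity theory, is exactly the paper's final step and is fine; the whole content is indeed the construction of the PL Morse function $f$. But your proposed construction of $f$ has a genuine gap, in fact two. First, the counting estimate $N \leq C_n'$ is false: Bishop--Gromov gives an \emph{upper} bound $\vol_n(B(x_i,r/2)) \leq \omega_n (r/2)^n$ on each ball, and an upper bound on the volume of the balls cannot bound from above the number of disjoint balls fitting inside a manifold of unit volume --- for that you would need a \emph{lower} volume bound, which nonnegative Ricci curvature does not provide. A thin flat torus (nonnegative, indeed zero, Ricci curvature) normalized to unit volume contains arbitrarily many disjoint balls of radius $r/2$ at any fixed scale $r \sim \vol_n(M)^{1/n}$, so your bound $N \cdot n\omega_n(2r)^{n-1}$ blows up precisely in the collapsed situations the theorem must handle. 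Second, even granting a bounded net, the claim that every level set of a PL interpolation of truncated distance functions lies in a union of geodesic spheres $\partial B(x_i,\rho)$ is unjustified: level sets of such combinations are not spheres, and coarea-type arguments only produce \emph{one} good level, not a bound on the supremum over all levels --- which is the quantity $\A(f)$ you need.

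This is exactly the difficulty the paper's machinery is built to overcome, and it does so by a recursive, multi-scale argument rather than a single-scale covering. Using a capacitor lemma (after Colbois--Maerten) at the scale $r = \bigl(\vol_n(D)/(\omega_n(1+2c_n))\bigr)^{1/n}$ attached to the \emph{current} domain, one finds two subsets of proportional volume at distance $\geq r$; the coarea formula then yields a single separating hypersurface $S$ with $\vol_{n-1}(S) \lesssim \vol_n(D)^{(n-1)/n}$ (Theorem~\ref{theo:split1}). This hypersurface is pushed into the $(n-1)$-skeleton of a Voronoi-type CW structure with quasi-convex cells (Proposition~\ref{prop:existence}, Theorem~\ref{theo:split2}) so that both pieces are again CW subcomplexes, one recurses on each piece, and the sweep-outs are merged at the cost of an additive term $2n\,\vol_{n-1}(S)$ (Proposition~\ref{prop:merging}); the geometric decay $\vol_n(D_i) \leq (1-\alpha'_n)\vol_n(D)$ makes the accumulated cost summable in the induction of Theorem~\ref{theo:main}. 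In short, Bishop--Gromov alone at a single scale does not suffice to bound $\sup_t \vol_{n-1}(f^{-1}(t))$; the bisection-and-merge recursion is the missing idea in your proposal.
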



Observe that both the Ricci curvature condition and the inequality are scale invariant in this theorem.
We do not know whether the curvature condition can be dropped in dimension greater than two.
In dimension two, it is the case according to~\cite[Theorem~5.3.1]{BZ}, \cite{heb} and~\cite{cro}: every closed Riemannian surface has a closed geodesic of length bounded from above in terms of the area of the surface.
Thus, Theorem~\ref{theo:0} can be viewed as a partial generalization of this result on closed geodesics on surfaces in terms of minimal hypersurfaces.

Another way to generalize this result would be to find an upper bound on the length of the shortest closed geodesic in a closed Riemannian manifold in terms of the volume.
Except for essential manifolds, where systolic inequalities hold~\cite{gro83}, and the two-sphere~\cite{cro}, this question is completely open, even on manifolds with nonnegative Ricci curvature.

The condition on the size of the singular set is customary in this context.
To our knowledge, it is still unknown if there exists a closed \emph{smooth} minimal hypersurface in every closed Riemannian manifold even with nonnegative Ricci curvature.
The existence  of a minimal hypersurface in Theorem~\ref{theo:0} actually derives from a min-max principle due to Almgren-Pitts in geometric measure theory (see~\cite{pitts,SS81,CD03,DT}).
When the Ricci curvature is positive, the geometry of the min-max minimal hypersurface~$S$ has recently been described in~\cite{zhou}. 

In fact, Theorem~\ref{theo:0} is a consequence of the following sweep-out estimate.

\begin{theorem} \label{theo:A}
Let $M$ be a closed Riemannian $n$-manifold with nonnegative Ricci curvature.
There exists a PL Morse function $f:M \to \R$ such that
\[
\sup_t \vol_{n-1}  (f^{-1}(t)) \leq C_n \, \vol_n(M)^{\frac{n-1}{n}}
\]
where $C_n$ is an explicit positive constant depending only on~$n$.
\end{theorem}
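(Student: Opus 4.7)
The plan is to construct the PL Morse function from a metric cover of $M$ at the scale $r \sim \vol_n(M)^{1/n}$, with combinatorics and cell geometry controlled by the Bishop--Gromov inequality, and then to interlace local sweep-outs into a global function whose \emph{supremum} over levels stays at the target order.

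First I would fix $r = \lambda_n \vol_n(M)^{1/n}$, choose a maximal $r$-separated net $\{x_i\}_{i=1}^N$ in $M$ (so that $\{B(x_i, 2r)\}$ covers $M$ while $\{B(x_i, r/2)\}$ is disjoint), and apply the Bishop--Gromov inequality $\vol_n(B(x, \rho)) \leq \omega_n \rho^n$ to deduce a dimensional bound $\mu_n$ on the covering multiplicity of $\{B(x_i, 2r)\}$. Since $\rho \mapsto \vol_n(B(x_i, \rho))/\rho^n$ is nonincreasing, I would further enlarge each radius to some $r_i \geq r$ with $\vol_n(B(x_i, r_i/2)) \gtrsim r_i^n$, so that $\sum_i r_i^n \lesssim \vol_n(M)$ while the cover property and bounded multiplicity are preserved.

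From this variable-scale cover I would build a Delaunay-type triangulation $T$ of $M$ whose top-dimensional simplices $\Delta$ satisfy $\diam(\Delta) \lesssim r_{i(\Delta)}$ and $\vol_n(\Delta) \lesssim r_{i(\Delta)}^n$, assign heights at the vertices in generic position, and extend affinely on each simplex to produce a PL Morse function $f : M \to \R$. A regular level set $f^{-1}(t)$ then meets each simplex $\Delta$ in an affine hyperplane section of $(n-1)$-volume at most $C_n r_{i(\Delta)}^{n-1}$ by an elementary simplex estimate. Summing naively over simplices touching level $t$ only gives $\vol_{n-1}(f^{-1}(t)) \lesssim \sum_{i \in I(t)} r_i^{n-1}$, and a brute-force H\"older bound on $I(t) = \{1,\dots,N\}$ costs an undesirable factor $N^{1/n}$.

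The crucial step is to organize the vertex heights so that every regular level $t$ is activated by only $\lesssim \mu_n$ of the balls. Concretely, I would assign each center $x_i$ its own height window of width $\sim r_i$, chosen so that the windows have bounded overlap; inside $B(x_i, r_i)$ the PL function would essentially play the role of the radial sweep-out by $\partial B(x_i, s)$, whose slice $(n-1)$-volume is $\leq n\omega_n s^{n-1}$ by Bishop--Gromov applied to $\vol_{n-1}(\partial B(x_i, s))$. Each level $t$ is then hit by at most $\mu_n$ such local sweep-outs, yielding $\vol_{n-1}(f^{-1}(t)) \lesssim \mu_n \max_i r_i^{n-1} \lesssim \vol_n(M)^{(n-1)/n}$, where the last inequality uses $\max_i r_i^n \lesssim \sum_j r_j^n \lesssim \vol_n(M)$. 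The main obstacle is realizing this interlacing as a \emph{genuine} PL Morse function rather than a formal parametrized family of slices: the combinatorial gluing across overlapping balls must be done so that the resulting simplexwise-affine function is Morse and the bounded-multiplicity control on active balls per level survives.
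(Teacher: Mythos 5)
Your ``crucial step'' is exactly where the argument breaks down, and it cannot be repaired within this accounting scheme. Demanding that every level $t$ be active in at most a dimensional constant $\mu_n$ of the covering balls means that the intervals $T_i=f(B(x_i,r_i))$ form an interval system of clique number at most $\mu_n$ in which overlapping balls receive overlapping intervals; equivalently, the nerve of the cover must have pathwidth bounded by a constant. At scale $r\sim \vol_n(M)^{1/n}$ this fails badly for collapsed manifolds with nonnegative Ricci curvature. Take the flat $3$-torus $S^1_\epsilon\times S^1_L\times S^1_L$ with $L=\epsilon^{-1/2}$, of unit volume: a maximal $1$-separated net has $\sim \epsilon^{-1}$ points whose nerve contains an $L\times L$ grid, of pathwidth $\sim L$, so \emph{every} PL Morse function has some level meeting $\gtrsim \epsilon^{-1/2}$ of the balls, no matter how cleverly you assign height windows. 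The theorem is nevertheless true there (the slices $S^1_\epsilon\times S^1_L\times\{pt\}$ have area $\epsilon L=\sqrt{\epsilon}$), but each active ball contributes only $\sim\epsilon$, far below your per-ball worst case $n\omega_n r^{n-1}\sim 1$; it is precisely this Euclidean per-ball bound that is too lossy, and compensating for it by bounding the number of active balls is impossible. A second, related error: Bishop--Gromov monotonicity only gives upper volume bounds, so you cannot \emph{enlarge} radii to achieve $\vol_n(B(x_i,r_i/2))\gtrsim r_i^n$ with $r_i\geq r$; in the collapsed example above no radius $\geq \vol_n(M)^{1/n}$ has this property, and enlarging radii also destroys the disjointness used for $\sum_i r_i^n\lesssim \vol_n(M)$.

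The paper circumvents this obstruction by a structurally different, recursive bisection scheme rather than a single fixed-scale cover. A capacitor argument based on the Bishop--Gromov packing bound (Lemma~\ref{lem:capa}) combined with the coarea formula produces a hypersurface of area $\lesssim \vol_n(D)^{(n-1)/n}$ splitting any bounded domain into two pieces each of a definite volume fraction (Theorem~\ref{theo:split1}); this hypersurface is then pushed into the $(n-1)$-skeleton of a CW structure with quasi-convex cells of small size (Theorem~\ref{theo:split2}), so the pieces are subcomplexes; Proposition~\ref{prop:merging} shows the widths of the two pieces combine by a maximum plus $2n$ times the interface area, because the two sub-sweep-outs are performed one after the other rather than simultaneously; induction then sums a convergent geometric series since the volumes decay by the factor $1-\alpha'_n$. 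The splitting hypersurface comes from a coarea slice of a distance function and thus adapts automatically to thin directions, which is what fixed-radius metric spheres at scale $\vol_n(M)^{1/n}$ cannot do. If you want to salvage your approach, you would need a per-ball slice estimate that sees the actual (possibly collapsed) volume of the ball rather than $r^{n-1}$, together with a summation argument that tolerates unboundedly many active balls; at that point you are essentially forced into an inductive decomposition of the above type.
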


Recall that a function $f:M \to \R$ is a PL Morse function if there exists a simplicial complex structure on~$M$ such that $f$ is linear on every simplex of~$M$ and takes pairwise distinct values at the vertices of~$M$.

As already mentioned, the sweep-out estimate of Theorem~\ref{theo:A} is stronger than Theorem~\ref{theo:0}.
A slightly more general version is given by Theorem~\ref{theo:main}.

Observe again that both the Ricci curvature condition and the inequality are scale invariant.
Contrary to the previous theorem where the question is open, this result fails without any curvature condition for every closed $n$-manifold with~$n \geq 3$ (see~Proposition~\ref{prop:cex}).
However, the curvature assumption can be dropped for closed Riemannian surfaces~\cite{BS10}.
In this case, the multiplicative constant depends on the genus of the surface (and it has to).

None of the multiplicative constants in Theorems~\ref{theo:0} or~\ref{theo:A} is optimal, even on two-dimensional convex spheres despite a local extremality result~\cite{bal10,sab10}.
For positively curved $3$-manifolds, related sharp upper bounds have been obtained in~\cite{MN12}.

When $M$ is a smooth convex hypersurface in a Euclidean $n$-space, the result of Theorem~\ref{theo:A} follows from~\cite{tre}.
A similar result also holds true for domains of the Euclidean $n$-space (see~\cite{fal80,guth07}).
Thus, our sweep-out estimate can be seen as a partial generalization of these results to a non-Euclidean setting.

Actually, Theorem~\ref{theo:A} can be formulated in terms of a min-max estimate by taking the infimum over all PL Morse functions of the supremum of the volume of their fibers.
Further consideration of min-max principles can be found in~\cite{gro83,gro88}.
Note that min-max processes are also involved in concentration phenomena. 
A related estimate holds for manifolds with nonnegative sectional curvature by replacing the volume of the fibers by the diameter (see~\cite{per95}).

As a consequence of our sweep-out estimate, we also derive the following isoperimetric inequality.

\begin{corollary} \label{coro:decomp}
There exists a positive constant~$C_n$ such that every closed Riemannian $n$-manifold~$M$ with nonnegative Ricci curvature decomposes into two connected domains with the same volume whose common boundary~$S$ satisfies
\[
\vol_{n-1}(S) \leq C_n \, \vol_n(M)^{\frac{n-1}{n}}.
\]
\end{corollary}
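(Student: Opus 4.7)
The approach is to extract from the sweep-out provided by Theorem~\ref{theo:A} a single level set bisecting the manifold, and then to enforce connectedness of the two resulting domains via a combinatorial-continuity argument. To start, I apply Theorem~\ref{theo:A} to obtain a PL Morse function $f : M \to \R$ with $\sup_t \vol_{n-1}(f^{-1}(t)) \leq C_n \vol_n(M)^{(n-1)/n}$. The level sets of such an $f$ are codimension-one PL subcomplexes of $n$-measure zero, so the sublevel volume $V(t) := \vol_n(\{f \leq t\})$ is continuous and monotone non-decreasing, ranging from $0$ to $\vol_n(M)$. The intermediate value theorem yields $t_\ast \in \R$ with $V(t_\ast) = \vol_n(M)/2$; since critical values of $f$ are finite in number while $V$ is continuous, a small perturbation makes $t_\ast$ regular. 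The hypersurface $S_0 := f^{-1}(t_\ast)$ then separates $M$ into two closed subsets $\Omega_\pm$ of equal volume, with $\vol_{n-1}(S_0) \leq C_n \vol_n(M)^{(n-1)/n}$.

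To address the potential disconnectedness of $\Omega_\pm$, I form the bipartite adjacency graph $G$ whose vertices are the connected components of $M \setminus S_0$ and whose edges are the components of $S_0$, each joining the unique pair of components it locally separates. Since $M$ is connected, so is $G$. A decomposition of $M$ into two connected domains with common boundary inside $S_0$ corresponds exactly to a vertex subset $T \subset V(G)$ such that both $T$ and $V(G) \setminus T$ induce connected subgraphs of $G$; the interface of any such decomposition is then a subfamily of components of $S_0$, so it inherits the $(n-1)$-volume bound automatically.

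The main obstacle is to enforce the exact volume equality $\sum_{v \in T} \vol_n(v) = \vol_n(M)/2$ together with the connectedness constraint. I would handle this by letting $t$ vary over the range of $f$ and maintaining an evolving choice $T(t)$ of components of $M \setminus f^{-1}(t)$ for which $T(t)$ and its complement in $M$ are both connected. For $t$ slightly above $\min f$ take $T(t) = \emptyset$; for $t$ slightly below $\max f$ take $T(t) = M$. On intervals between consecutive critical values of $f$ the volume $\vol_n(T(t))$ varies continuously in $t$; at each critical level, the combinatorial choice is updated so as to preserve connectedness on both sides while keeping the volume function continuous. An intermediate value argument then provides a parameter $t_\ast$ with $\vol_n(T(t_\ast)) = \vol_n(M)/2$. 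The technical heart of the argument is the combinatorial bookkeeping at critical levels, amounting to a careful navigation of the Reeb graph of $f$ so that the connectedness constraint is preserved throughout the sweep-out.
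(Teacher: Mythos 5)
Your first step---taking the level of the Theorem~\ref{theo:A} sweep-out that bisects the volume---is exactly how the paper begins, and the area bound at that level is fine. The gap is in the connectedness step. Everything hinges on your claim that the evolving family $T(t)$ of components of $M\setminus f^{-1}(t)$, constrained so that both $T(t)$ and its complement are connected, can be updated at critical levels ``while keeping the volume function continuous''. That claim is false in general, and Theorem~\ref{theo:A} gives no control whatsoever on the Reeb graph of $f$ that would exclude the bad situation. Concretely, suppose the Reeb graph of $f$ is a tripod: a trunk carrying $\tfrac{1}{5}\vol_n(M)$ of the volume which, above a single saddle level, splits into two branches carrying $\tfrac{2}{5}\vol_n(M)$ each. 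For a slice below the saddle there are only two complementary components, of volumes at most $\tfrac{1}{5}\vol_n(M)$ and at least $\tfrac{4}{5}\vol_n(M)$. For a slice above the saddle, the admissible choices of $T$ (both sides connected) are a single upper branch piece, of volume at most $\tfrac{2}{5}\vol_n(M)$, or the bottom component together with one branch piece, of volume at least $\tfrac{3}{5}\vol_n(M)$; taking the bottom component alone is forbidden because its complement is disconnected. So the volumes realizable by a connected bisection subordinate to a single level set of $f$ omit the value $\tfrac{1}{2}\vol_n(M)$ altogether, and $\vol_n(T(t))$ necessarily jumps by at least $\tfrac{1}{5}\vol_n(M)$ as $t$ crosses the saddle value. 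No combinatorial bookkeeping or navigation of the Reeb graph can repair this: for such an $f$ the interface of an exact, connected, equal-volume decomposition simply cannot be confined to one level set.

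The paper gets around this by abandoning the requirement that the interface stay inside a level set: for $n\ge 3$ it keeps the (possibly disconnected) equal-volume domains bounded by the bisecting level and connects the components of each domain by thin tubes, whose volume and boundary area are arbitrarily small, adjusting slightly so the two volumes remain equal; drilling such tubes does not disconnect the other domain precisely because $n\ge 3$. Dimension $2$ is handled separately via the Calabi--Cao sweep-out of nonnegatively curved two-spheres by disjoint simple loops. To salvage your argument you would have to either allow the common boundary to use pieces of several level sets (which is a genuinely different proof) or introduce a tube-type correction as the paper does; as written, the continuity assertion at critical levels is not a technicality to be checked but the point where the proof breaks.
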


To conclude this introduction, we mention some recent applications of the min-max process on three-dimensional manifolds to emphasize its importance in current research.
(The most recent results mentioned below even appeared after this paper was submitted for publication.)
In~\cite{CM05}, T.~Colding and W.~Minicozzi established min-max estimates through the Ricci flow which allowed them to simplify some arguments in Perelman's proof of the Poincar\'e conjecture through  a more conceptual approach.
In~\cite{DP10} and~\cite{Ke}, the authors established genus bound for minimal surfaces constructed via min-max arguments on every closed Riemannian $3$-manifold.
Recently, F.~Marques and A.~Neves proved the Willmore conjecture~\cite{MN} and a conjecture of Yau about the existence of infinitely many minimal hypersurfaces in closed Riemannian manifolds with positive Ricci curvature~\cite{MN14} by using a min-max process \`a la Almgren-Pitts.

We also mention that P. Glynn-Adey and Y. Liokumovich recently placed a preprint on arxiv \cite{GAL} in which they establish Theorem~\ref{theo:0} among other results. 
The techniques seem somewhat different though they also rely on sweep-out estimates.

\medskip

Throughout this paper, a domain of a complete $n$-manifold with piecewise smooth boundary is an $n$-submanifold with piecewise smooth boundary.


\section{CW complex structure with quasi-convex cells of small size}

In this section, we introduce some key notions for the rest of the article and establish preliminary results which will be used later.

\medskip

The definition of a CW complex can be found in~\cite[Appendix]{hat} along with some related topological properties and applications.
We simply recall that every $k$-cell of a CW complex~$X$ is attached to the $(k-1)$-skeleton~$X^{(k-1)}$ of~$X$.
The map from this $k$-cell to~$X$ is referred to as the characteristic map of the $k$-cell.

We will denote by $B^n(\rho)$ the (open) $\rho$-ball centered at the origin of~$\R^n$.
Recall also that a $\lambda$-quasi-isometry between two metric space is a homeomorphism which is $\lambda$-Lipschitz and whose reciprocal map is also $\lambda$-Lipschitz.

\medskip

The geometry of a CW complex structure can be measured through the following notion.

\begin{definition} \label{def:cellstruc}
Let $\rho >0$ and $\lambda >1$.
A compact Riemannian $n$-manifold~$D$ with (possibly empty) piecewise smooth boundary admits a CW complex structure with quasi-convex cells of size~$(\rho,\lambda)$ if 
\begin{enumerate}
\item There exists a $\lambda$-quasi-isometry $\varphi:P \to D$ between a piecewise flat simplicial $n$-complex $P$ and~$D$. \label{i}
\item The simplicial $n$-complex~$P$ decomposes into $n$-cells $\Delta_k = \cup_i \, \sigma_{k,i}$ formed of a union of simplices~$\sigma_{k,i}$ such that each $n$-cell~$\Delta_k$ is $\lambda$-quasi-isometric to a convex Euclidean polyhedron~$\mathcal{E}_k$ lying in~$B^n(6 \rho)$ and containing~$B^n(2\rho)$.
This quasi-isometry is denoted by $\chi_k:\Delta_k \to \mathcal{E}_k$. \!\!\!\!\!\!\!\! \label{ii}
\item For every $k$, the composite $\varphi \circ \chi_k^{-1}:\mathcal{E}_k \to \varphi(\Delta_k)$ is $\lambda$-quasi-isometric. \label{iii}
\end{enumerate}
The decomposition of~$P$ into $n$-cells $\Delta_k$ gives rise to a CW complex structure on~$D$ where the characteristic maps are given by the quasi-isometries~$\varphi \circ \chi_k^{-1}$ described in~\eqref{iii}.
\end{definition}

For our purpose, we can think of~$D$ as an $n$-polyhedron whose $n$-faces are quasi-isometric to convex Euclidean polyhedra.
Despite the risk of confusion, we will often identify the $n$-cells of~$D$ with their images by the characteristic maps.

\medskip

The $n$-simplices composing the piecewise flat simplicial $n$-complex~$P$ are not necessarily regular as their edges may have different lengths.
However the $n$-cells of~$P$ have a uniform size, roughly the size of~$B^n(\rho)$, while still being almost convex.
(This is the reason why we introduce this definition.)
In particular, the volume of the $n$-cells is between $\lambda^{-n} \omega_n 2^n \rho^n$ and $\lambda^n \omega_n 6^n \rho^n$, where $\omega_n$ represents the volume of the unit ball centered at the origin of~$\R^n$.
Thus, if $N$ represents the number of $n$-cells in~$D$, we immediately obtain
\begin{equation} \label{eq:N}
N \lambda^{-n} \omega_n 2^n \rho^n \leq \vol_n(D) \leq N \lambda^n \omega_n 6^n \rho^n.
\end{equation}

The following result shows the existence of CW complex structures with quasi-convex cells of small size on closed Riemannian manifolds.

\begin{proposition} \label{prop:existence}
Let $D$ be a bounded domain of a complete Riemannian $n$-manifold~$M$.
For every $\rho > 0$ small enough, the domain~$D$ admits a CW complex structure with quasi-convex cells of size~$(\rho,\lambda_\rho)$ with $\lim_{\rho \to 0} \lambda_\rho = 1$.
\end{proposition}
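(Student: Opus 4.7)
The plan is to exploit the fact that at small scales a Riemannian manifold is almost Euclidean through the exponential map: I decompose $D$ into intrinsic Voronoi cells based at a maximal separated net and transport each cell to the tangent space at its centre, where it becomes close to a genuine convex Euclidean polyhedron. Since $\bar D$ is compact, there exist $\rho_0 > 0$ and a function $\rho \mapsto \lambda_\rho$ with $\lim_{\rho \to 0} \lambda_\rho = 1$ such that, for every $p$ in some fixed open neighbourhood of $\bar D$ and every $\rho < \rho_0$, the exponential map $\exp_p : B^n(10 \rho) \subset T_p M \to M$ is a $\lambda_\rho$-quasi-isometry onto its image. This is standard Riemannian comparison, following from the $C^2$-convergence of the metric pulled back by $\exp_p$ to the flat Euclidean metric in normal coordinates, with uniform estimates over $\bar D$.

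Fix $\rho < \rho_0$ and pick a maximal $3\rho$-separated set $\{p_k\}$ in $D$. Form the intrinsic Voronoi cells $V_k = \{x \in D : d(x,p_k) \leq d(x,p_j) \text{ for all } j\}$. By maximality, every point of $D$ lies within $3\rho$ of some $p_k$, so each $V_k$ has diameter at most $6\rho$ and contains the open metric ball of radius $3\rho/2$ around $p_k$. Transport $V_k$ to $T_{p_k} M \simeq \R^n$ via $\exp_{p_k}^{-1}$ and let $\mathcal E_k \subset \R^n$ be the convex Euclidean polyhedron cut out by the perpendicular bisectors of the segments from the origin to each $\exp_{p_k}^{-1}(p_j)$ with $d(p_k,p_j) \leq 7\rho$. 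The quasi-isometry property of $\exp_{p_k}$ yields $B^n(2\rho) \subset \mathcal E_k \subset B^n(6\rho)$ together with a $\lambda_\rho$-quasi-isometry between $\mathcal E_k$ and $V_k$ factoring through $\exp_{p_k}$, with $\lambda_\rho \to 1$ as $\rho \to 0$.

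To build the global structure, I triangulate each $\mathcal E_k$ by coning from its barycentre, subdividing the induced triangulations of the $(n-1)$-faces so that triangulations on adjacent cells agree combinatorially. The piecewise flat simplicial complex $P$ is obtained by gluing the $\mathcal E_k$'s along these identifications, the $n$-cells $\Delta_k$ of $P$ are the glued copies of the $\mathcal E_k$, and the map $\varphi : P \to D$ is defined piecewise by $\varphi|_{\Delta_k} = \exp_{p_k} \circ \chi_k^{-1}$, where $\chi_k : \Delta_k \to \mathcal E_k$ is the identification induced by the gluing. Each piece is a $\lambda_\rho$-quasi-isometry, and a direct comparison of distances using the triangle inequality in $M$ and $\R^n$ promotes this to a global $\lambda_\rho'$-quasi-isometry with $\lambda_\rho' \to 1$.

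The main obstacle is ensuring that cells glue compatibly: the face of $\mathcal E_k$ corresponding to the wall between $V_k$ and $V_j$ and the analogous face of $\mathcal E_j$ live in distinct tangent spaces, and their images in $M$ under $\exp_{p_k}$ and $\exp_{p_j}$ need not coincide exactly, only up to a distortion of order $\lambda_\rho - 1$. I would resolve this by fixing the abstract gluing combinatorially once and for all, declaring a single polytopal wall between $\Delta_k$ and $\Delta_j$ in $P$, and absorbing the mismatch into the quasi-isometries $\chi_k$ and $\chi_j$, whose individual distortions remain controlled by $\lambda_\rho$. If $\partial D$ is nonempty, the construction is carried out in a slightly enlarged open ambient neighbourhood: boundary Voronoi cells are intersected with $\partial D$ and their curved boundary faces are replaced by their Euclidean polyhedral approximations. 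Since the proposition only requires $\lambda_\rho \to 1$, all of these small approximation errors are admissible and the decomposition satisfies Definition~\ref{def:cellstruc} for every sufficiently small $\rho$.
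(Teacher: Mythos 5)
Your overall route is the same as the paper's: a maximal separated net, intrinsic Voronoi cells, transport to the tangent space by the exponential map (where the cell becomes a convex Euclidean Voronoi polyhedron), then triangulation and gluing into a piecewise flat complex $P$. The decisive step, however, is exactly the one you leave unresolved. For $P$ to be a piecewise flat simplicial $n$-complex, the identified $(n-1)$-faces of adjacent cells must be \emph{isometric} Euclidean polytopes, and ``declaring a single polytopal wall between $\Delta_k$ and $\Delta_j$ and absorbing the mismatch into $\chi_k$ and $\chi_j$'' does not produce such a structure: once you replace the wall of $\mathcal{E}_j$ by a nearby but different polytope, the cell $\Delta_j$ is no longer $\mathcal{E}_j$, and you must actually exhibit flat simplices filling $\Delta_j$ whose boundary faces realize \emph{all} of its prescribed walls simultaneously and consistently. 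This does not follow from the quasi-isometry control alone --- one cannot prescribe the facets of a Euclidean polytope independently and expect a compatible flat metric to exist; the mismatch sits in the metric of $P$ itself, not in the comparison maps. The paper resolves precisely this point with a concrete mechanism: choose a quasi-geodesic triangulation of the $\mathcal{E}_i$ inducing a common triangulation of the shared walls, and then, for every edge of the resulting triangulation of $D$, replace the corresponding edge of each Euclidean simplex by one of length equal to the \emph{average} of the lengths over all simplices containing that edge. Since a Euclidean simplex is determined by its edge lengths and small perturbations remain non-degenerate, the modified simplices are still flat, adjacent simplices now have genuinely isometric common faces (their edge lengths agree), and the distortion still tends to $1$ as $\rho \to 0$. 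Your proof needs this averaging step (or an equivalent construction); without it the gluing into a piecewise flat complex is not established.

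Two smaller points. First, with a maximal $3\rho$-separated net the Voronoi cells are only guaranteed to contain balls of radius $3\rho/2$, so the inclusion $B^n(2\rho) \subset \mathcal{E}_k$ required by Definition~\ref{def:cellstruc} fails as you state it; take centers of \emph{disjoint} $2\rho$-balls (a $4\rho$-separated net), as in the paper. Second, your boundary treatment is too loose: intersecting Voronoi cells of an ambient net with $D$ can create thin slivers near $\partial D$ that contain no inscribed $2\rho$-ball and are not quasi-isometric to a convex polytope containing $B^n(2\rho)$. The paper avoids this by first seeding centers on the equidistant set $\{x \in D \mid d(x,\partial D) = 2\rho\}$ (with $\rho$ below half the focal radius of $\partial D$), which forces the $2\rho$-ball at each center to lie in $D$, makes the $6\rho$-balls around these centers cover the boundary strip, and hence keeps every cell pinched between $B^n(2\rho)$ and $B^n(6\rho)$; your ``slightly enlarged ambient neighbourhood'' does not address this.
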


\begin{proof}
Without loss of generality, we can assume that the boundary of~$D$ is smooth (otherwise we smooth it out).
Let $\rho \in (0,\frac{1}{100} \inj(M))$ with $\rho$ smaller than half the focal radius of~$\partial D$.
Let $\partial D_{= 2\rho} = \{ x \in D \mid d_M(x,\partial D) = 2\rho \}$ and  $\partial D_{\leq 2\rho} = \{ x \in D \mid d_M(x,\partial D) \leq 2\rho \}$.
Consider a maximal system of disjoint $2\rho$-balls of~$M$ centered in~$\partial D_{= 2\rho}$ and denote by~$x_i$ the centers of these balls.
Note that $B(x_i,2\rho)$ lies in~$D$.
Since the system of balls is maximal, the balls~$B(x_i,4\rho)$ cover~$\partial D_{= 2\rho}$.
As $\rho$ is smaller than half the focal radius of~$\partial D$, every point of~$\partial D$ is at distance~$2\rho$ from some point of~$\partial D_{=2 \rho}$.
Thus, the balls~$B(x_i,6 \rho)$ cover the $2\rho$-neighborhood~$\partial D_{\leq 2\rho}$ of~$\partial D$ in~$D$.


Now, we complete the collection of disjoint balls $B(x_i,2\rho)$ of~$M$ lying in~$D$ into a maximal system of disjoint $2\rho$-balls of~$M$ lying in~$D$.
We will still denote by~$x_i$ the centers of the balls thus obtained.
The Voronoi cell 
\[
V_i = \{ x \in D \mid d(x,x_i) \leq d(x,x_j) \mbox{ for every } j \neq i \}
\]
centered at~$x_i$ clearly contains the ball~$B(x_i,2\rho)$.
It also lies in the ball~$B(x_i,6 \rho)$.
Otherwise, we could find a point~$x \in D$ at distance at least~$6 \rho$ from any point~$x_j$.
From the previous paragraph, this implies that the point~$x$ is at distance at least~$\rho$ from~$\partial D$.
Thus, the ball~$B(x,2\rho)$ lies in~$D$ and is disjoint from the other balls~$B(x_j,2\rho)$.
This contradicts the construction of the points~$x_i$.

A consequence of these inclusions and the Gauss lemma is that the preimage~$\hat{V}_i$ of~$V_i$ by the exponential map~$\exp_{x_i}$ based at~$x_i$ lies in~$B^n(6 \rho)$ and contains~$B^n(2\rho)$.
This preimage is quasi-isometric to the convex polyhedral Voronoi cell~$\mathcal{E}_i$ formed of the vectors~$u$ of~$T_{x_i}M$ whose distance to the origin is less or equal to the distance from~$u$ to the preimages of the~$x_j$ under~$\exp_{x_i}$ for every $j \neq i$.
Furthermore, the factor of the quasi-isometry $V_i \to \hat{V}_i \to \mathcal{E}_i$ tends to~$1$ when $\rho$ goes to~$0$, \cf~Definition~\ref{def:cellstruc}, item~\eqref{iii}.
(In the following, all the quasi-isometries will satisfy this property.)

We would like to glue the convex Euclidean polyhedra~$\mathcal{E}_i$ together to construct a simplicial $n$-complex~$P$ quasi-isometric to~$D$.
Two adjacent Voronoi cells~$V_i$ and~$V_j$ in~$D$ meeting along an $(n-1)$-face give rise to two convex Euclidean polyhedra~$\mathcal{E}_i$ and~$\mathcal{E}_j$ along with two corresponding $(n-1)$-faces $e_{i,j}$ and~$e_{j,i}$ in~$\mathcal{E}_i$ and~$\mathcal{E}_j$.
If $e_{i,j}$ and~$e_{j,i}$ were isometric, we could glue the convex Euclidean polyhedra~$\mathcal{E}_i$ and~$\mathcal{E}_j$ together.
Unfortunately, this is not always the case.
Indeed, the $(n-1)$-faces $e_{i,j}$ and~$e_{j,i}$, though combinatorially equivalent, are only quasi-isometric.

To get around this technical problem, we consider a quasi-geodesic triangulation of the convex Euclidean polyhedra~$\mathcal{E}_i$ such that the induced triangulations on the Voronoi cells~$V_i$ in~$D$ agree on their common faces.
This gives rise to a triangulation~$\mathcal{T}$ of~$D$ (refining the CW complex structure of~$D$ given by the Voronoi cells) where each simplex~$\sigma$ is quasi-isometric to a Euclidean simplex~$\sigma'$. 

Now, we need to deform these Euclidean simplices~$\sigma'$ in order to glue them together.
Let $e$ be an edge of~$\mathcal{T}$.
Denote by $\sigma'_j$, where $j=1,\cdots,p$, the Euclidean simplices with an edge~$e'_j$ corresponding to~$e$.
Replace the edges~$e'_j$ of~$\sigma'_j$ with an edge of the same length equal to the average of the lengths of~$e'_1,\cdots,e'_p$.
As the edges~$e'_j$ are quasi-isometric to~$e$, the same goes for the new edges.

By applying this averaging argument to every edge of the triangulation~$\mathcal{T}$, we obtain new Euclidean simplices~$\sigma''$ quasi-isometric to the initial Euclidean simplices~$\sigma'$ and so to the simplex~$\sigma$.
Furthermore, the $(n-1)$-faces of these new Euclidean simplices~$\sigma''$ corresponding to common faces of adjacent simplices in~$D$ are isometric since the lengths of their edges agree.

Therefore, we can replace each simplex~$\sigma$ in~$D$ with a quasi-isometric Euclidean simplex~$\sigma''$ so that the resulting space is a simplicial $n$-complex~$P$ quasi-isometric to~$D$, \cf~Definition~\ref{def:cellstruc}, item~\eqref{ii}.
By construction, the simplices of~$P$ corresponding to the Euclidean simplices of the triangulation of~$\mathcal{E}_k$ form an $n$-cell~$\Delta_k$ quasi-isometric to~$\mathcal{E}_k$, \cf~Definition~\ref{def:cellstruc}, item~\eqref{ii}.
Since the factors of all these quasi-isometries tends to~$1$ when $\rho$ goes to~$0$, this yields the desired result.
\end{proof}

We will also need the following result.

\begin{lemma} \label{lem:proj}
Fix $\rho >0 $ and $\lambda>1$ close enough to~$1$.
Let $\Delta$ be an $n$-cell of a CW complex structure with quasi-convex cells of size~$(\rho,\lambda)$ on a bounded domain of a compact Riemannian $n$-manifold.
There exists a retraction 
\[
\pi_\Delta:\Delta \setminus B(\lambda \rho) \to \partial \Delta
\]
of~$\partial \Delta$ with Lipschitz constant at most~$36 \lambda^2$.
\end{lemma}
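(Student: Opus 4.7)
The plan is to reduce the construction to the Euclidean model via the characteristic map of the cell, and to define $\pi_\Delta$ as the pullback of the radial projection from the convex model polyhedron~$\mathcal{E}$ onto its boundary.

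By Definition~\ref{def:cellstruc}, the $n$-cell~$\Delta$ comes equipped with a $\lambda$-quasi-isometry $\chi : \Delta \to \mathcal{E}$, where $\mathcal{E} \subset \R^n$ is a convex Euclidean polyhedron satisfying $B^n(2\rho) \subset \mathcal{E} \subset B^n(6\rho)$. Let $x_0 \in \Delta$ denote the preimage of the origin and interpret $B(\lambda\rho)$ as the metric ball of radius~$\lambda\rho$ about~$x_0$ in~$\Delta$. Since $\chi^{-1}$ is $\lambda$-Lipschitz, every $y \in B^n(\rho) \cap \mathcal{E}$ satisfies $d_\Delta(\chi^{-1}(y), x_0) \leq \lambda |y| \leq \lambda\rho$, so $\chi$ maps $\Delta \setminus B(\lambda\rho)$ into $\mathcal{E} \setminus B^n(\rho)$. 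It therefore suffices to construct a retraction $p : \mathcal{E} \setminus B^n(\rho) \to \partial\mathcal{E}$ with Lipschitz constant at most~$36$, and then set
\[
\pi_\Delta := \chi^{-1} \circ p \circ \chi.
\]
Since $\chi$ is a homeomorphism carrying $\partial\Delta$ onto~$\partial\mathcal{E}$ and $p$ fixes~$\partial\mathcal{E}$ pointwise, this composition restricts to the identity on~$\partial\Delta$; the $\lambda$-Lipschitz control on both~$\chi$ and~$\chi^{-1}$ then yields $\mathrm{Lip}(\pi_\Delta) \leq 36\lambda^2$.

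For~$p$, the natural choice is the radial projection $p(y) = \rho_\mathcal{E}(u)\, u$, where $u = y/|y|$ and $\rho_\mathcal{E}(u) = \sup\{t > 0 : tu \in \mathcal{E}\}$ is the radial function of~$\mathcal{E}$ seen from the origin. The key estimate is that $\rho_\mathcal{E}$ is Lipschitz on~$S^{n-1}$ with constant at most $b^2/a = 18\rho$ for $a = 2\rho$ and $b = 6\rho$; this follows from the classical identity $\rho_\mathcal{E} = 1/h_{\mathcal{E}^\circ}$, together with the support function~$h_{\mathcal{E}^\circ}$ of the polar body~$\mathcal{E}^\circ$ being $(1/a)$-Lipschitz and bounded below by~$1/b$. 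Combining this with the elementary estimate $|u_1 - u_2| \leq |y_1 - y_2|/\min(|y_1|,|y_2|)$ for nonzero $y_i$ (with $u_i = y_i/|y_i|$), and the decomposition
\[
p(y_1) - p(y_2) = \rho_\mathcal{E}(u_1)\,(u_1 - u_2) + \bigl(\rho_\mathcal{E}(u_1) - \rho_\mathcal{E}(u_2)\bigr)\,u_2,
\]
a direct computation gives the claimed Lipschitz bound for~$p$ on $\mathcal{E} \setminus B^n(\rho)$.

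The main technical obstacle is the Lipschitz control on~$\rho_\mathcal{E}$, which depends crucially on the convexity of~$\mathcal{E}$: without convexity the radial function could oscillate wildly and no uniform estimate would hold. Once this bound is secured, the rest of the argument is a routine transfer across the $\lambda$-quasi-isometry~$\chi$, absorbing two factors of~$\lambda$ into the final Lipschitz constant.
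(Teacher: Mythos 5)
Your construction is correct and follows essentially the same route as the paper: conjugate the Euclidean radial projection $\mathcal{E} \setminus B^n(\rho) \to \partial\mathcal{E}$ by the characteristic $\lambda$-quasi-isometry, picking up the factor $\lambda^2$. The only difference is how the Lipschitz constant of the radial projection is bounded: the paper performs an explicit planar worst-case computation giving $36$, whereas your polar-body/support-function argument yields $6\rho\cdot\frac{1}{\rho} + \frac{(6\rho)^2}{2\rho}\cdot\frac{1}{\rho} = 24 \leq 36$, so your estimate is, if anything, slightly sharper.
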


\begin{proof}
By definition, the $n$-cell $\Delta$ is $\lambda$-quasi-isometric to a convex Euclidean polyhedron~$\mathcal{E}$ lying in~$B^n(6 \rho)$ and containing~$B^n(2\rho)$.
The radial projection from $\mathcal{E} \setminus B^n(\rho)$ to~$\partial \mathcal{E}$ is Lipschitz.
A sharp upper bound on its Lipschitz constant is given by the limit from the right at $t=0$ of the length ratio $\frac{A'B'}{AB}$, where the points $A,A',B,B' \in \R^2$ are defined as follows, \cf~Figure~\ref{fig:proj}.
\begin{itemize}
\item The coordinates of $A$ and~$A'$ are $(\rho,0)$ and $(6 \rho,0)$.
\item The point~$B$ lies in the same vertical line as~$A$ and satisfies~$\widehat{AOB}=t$.
\item The point~$B'$ is the intersection point of $(OB)$ with the line tangent at~$A$ to the circle of radius~$\rho$ centered at the origin.
\end{itemize}

\begin{figure}[htbp] 
\def\svgwidth{8cm} 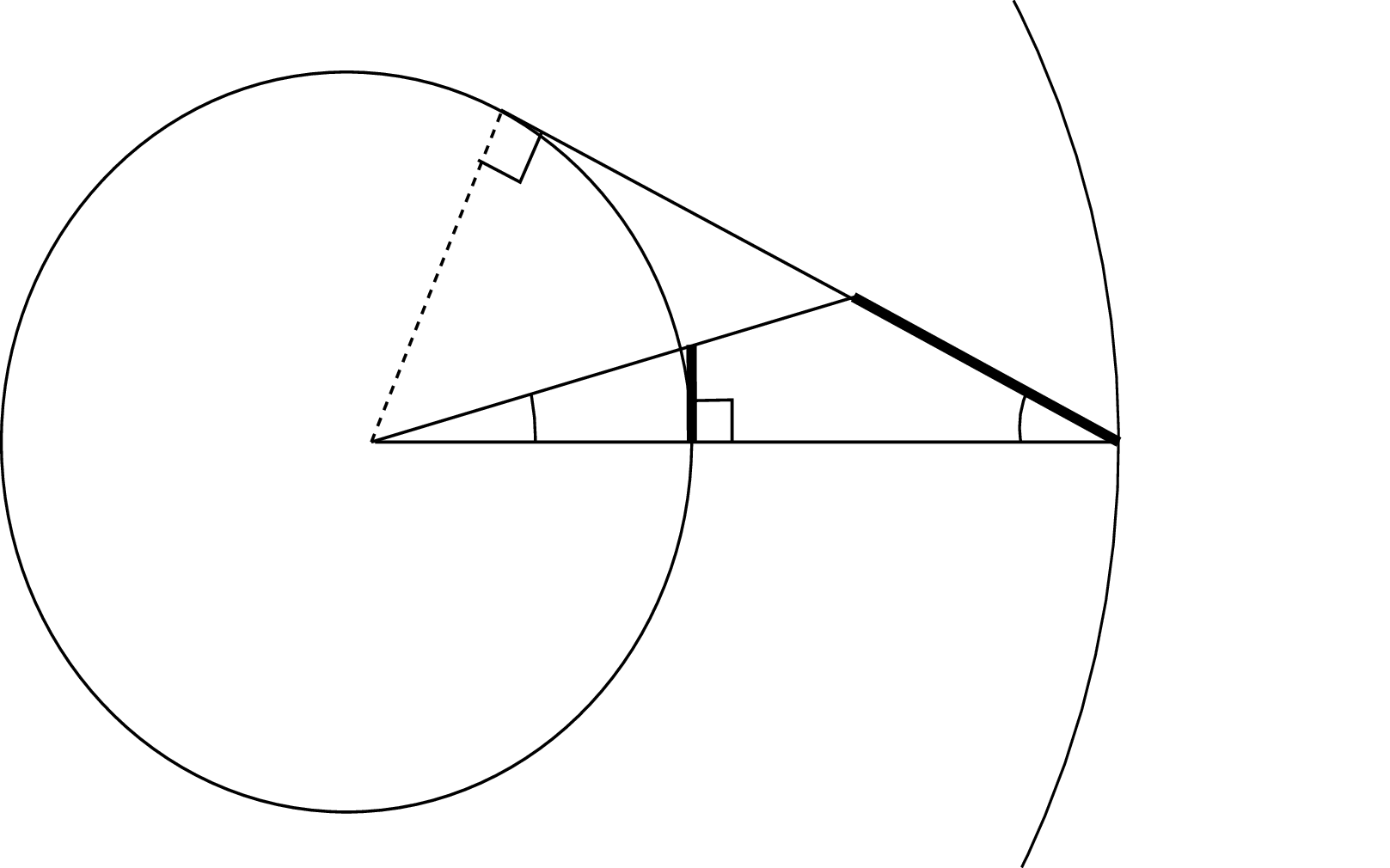
\caption{Extremal case of the projection} \label{fig:proj}
\end{figure}

A straightforward computation shows that $AB= \rho \tan t$, $\displaystyle A'B'=6 \rho \frac{\sin t}{\sin(\theta+t)}$ and $\sin \theta = \frac{1}{6}$ where $\theta=\widehat{B'A'O}$.
Hence, 
\[
\frac{A'B'}{AB}=\frac{6 \cos t}{\sin(\theta+t)} \longrightarrow 36.
\]
We derive the desired Lipschitz constant after pre- and post-composition with a $\lambda$-quasi-isometry.
\end{proof}

\section{Splitting manifolds}

The goal of this section is to prove the following result.

\begin{theorem} \label{theo:split1}
Let $M$ be a complete Riemannian $n$-manifold with nonnegative Ricci curvature.
For every bounded domain~$D$ of~$M$, there exists a smooth hypersurface~$S$ of~$D$ with boundary lying in~$\partial D$ which decomposes~$D$ into two domains $D_1$ and~$D_2$ with $\vol_n(D_i) \geq \alpha_n \vol_n(D)$ such that
\[
\vol_{n-1}(S) \leq A_n \, \vol_n(D)^{\frac{n-1}{n}}
\]
where $A_n$ and $\alpha_n$ are explicit positive constants depending only on~$n$.
\end{theorem}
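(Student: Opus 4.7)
The plan is to construct the separating hypersurface as a level set of the distance function from a carefully chosen base point $p \in D$, using the coarea formula combined with the Bishop--Gromov volume comparison (which is where the nonnegative Ricci hypothesis enters).

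After smoothing $\partial D$ if necessary, I would fix $p \in D$ and set $f(x) := d(p,x)$, $\phi(r) := \vol_n(B(p,r) \cap D)$. The coarea formula yields $\phi'(r) = \vol_{n-1}(\{f = r\} \cap D)$ for almost every $r$, while Bishop--Gromov gives $\phi(r) \leq \omega_n r^n$ and $\phi'(r) \leq n\omega_n r^{n-1}$, where $\omega_n$ denotes the volume of the unit Euclidean ball. Let $r_1 < r_2$ be the smallest radii with $\phi(r_1) = \vol_n(D)/3$ and $\phi(r_2) = 2\vol_n(D)/3$. For every $r \in [r_1, r_2]$, both $\{f \leq r\} \cap D$ and its complement in $D$ have volume at least $\vol_n(D)/3$, and the coarea identity
\[
\int_{r_1}^{r_2} \phi'(r)\, dr \;=\; \tfrac{1}{3}\vol_n(D)
\]
together with pigeonholing produces some $r_\star \in [r_1, r_2]$ with $\phi'(r_\star) \leq \vol_n(D)/(3(r_2 - r_1))$. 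By Sard's theorem, $r_\star$ may further be chosen so that $S := \{f = r_\star\} \cap D$ is a smooth hypersurface meeting $\partial D$ transversally, which gives the decomposition $D = D_1 \cup D_2$ with $\vol_n(D_i) \geq \vol_n(D)/3$.

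What remains is to establish the quantitative estimate $r_2 - r_1 \geq c_n \vol_n(D)^{1/n}$, from which the stated slice bound follows. I would proceed by a dichotomy. In the case $r_2 \geq 2 r_1$, I would apply coarea on the sub-interval $[r_1, 2 r_1] \subset [r_1, r_2]$, on which the coarea integral remains at most $\vol_n(D)/3$ (since $\phi(2r_1) \leq \phi(r_2) = 2\vol_n(D)/3$); combined with the Bishop--Gromov lower bound $r_1 \geq (\vol_n(D)/(3\omega_n))^{1/n}$, this directly yields the desired slice area estimate. In the opposite case $r_2 < 2 r_1$, the annulus bound $\omega_n(r_2^n - r_1^n) \geq \vol_n(D)/3$ (obtained by integrating $\vol_{n-1}(S(p,r)) \leq n\omega_n r^{n-1}$ across the annulus $B(p,r_2) \setminus B(p,r_1)$), combined with $r_2^{n-1} \leq (2r_1)^{n-1}$, gives $r_2 - r_1 \geq \vol_n(D)/(3 n \omega_n 2^{n-1} r_1^{n-1})$; this is the required estimate provided $r_1 \leq C_n \vol_n(D)^{1/n}$.

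The main obstacle, and the step requiring the most care, is the choice of $p$ needed to secure the upper bound on $r_1$ in the second case. Concretely, one must produce $p \in D$ such that some ball of radius comparable to $\vol_n(D)^{1/n}$ centered at $p$ contains a definite fraction of $\vol_n(D)$. I would obtain such a point via a Vitali-type covering argument: the Bishop--Gromov volume doubling property bounds the multiplicity of any $r$-cover of $D$ in terms of $n$ alone, so a pigeonhole among disjoint Vitali balls yields a center with the required density. Adjusting the two thresholds from $\tfrac13$ and $\tfrac23$ to smaller constants $\alpha_n$ and $1-\alpha_n$ if necessary accommodates the density lost to the Vitali step, producing the dimensional constants $A_n$ and $\alpha_n$ in the conclusion.
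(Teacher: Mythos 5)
The skeleton of your argument (coarea plus Bishop--Gromov applied to a distance-type function, then a pigeonhole over levels) is in the same spirit as the paper's proof, but the step you yourself flag as the main obstacle is a genuine gap, and the proposed fix does not work. The claim that one can choose $p\in D$ so that a ball of radius comparable to $\vol_n(D)^{1/n}$ centered at $p$ captures a definite ($n$-dependent) fraction of $\vol_n(D)$ is false, already for domains of $\R^n$ (which have nonnegative Ricci): for a solid tube of length $L$ and cross-sectional radius $\epsilon$, any ball of radius $\rho\sim(L\epsilon^{n-1})^{1/n}$ meets $D$ in volume at most about $\rho\,\epsilon^{n-1}$, i.e.\ a fraction $\sim(\epsilon/L)^{(n-1)/n}$ of $\vol_n(D)$, which tends to $0$. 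The Vitali/doubling argument cannot rescue this: Bishop--Gromov bounds the overlap multiplicity of a cover, not the \emph{number} of balls of radius $\rho$ needed to cover $D$, and it is the number of balls that enters your pigeonhole, so you only get a $1/N$ fraction with $N$ unbounded. Consequently your case $r_2<2r_1$ is not closed: that case genuinely occurs with $r_1\gg \vol_n(D)^{1/n}$ (e.g.\ $p$ near the middle of a long thin tube, where $r_1\approx L/6$ and $r_2$ is just below $2r_1$), and there your chain of inequalities gives no lower bound of the form $r_2-r_1\geq c_n\vol_n(D)^{1/n}$. (The nonsmoothness of $d(p,\cdot)$ at the cut locus is a separate, but minor and fixable, issue.)

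This is exactly the difficulty the paper's proof is built to avoid. Instead of a single ball, it uses a capacitor construction (Lemma~\ref{lem:capa}, following Colbois--Maerten and Grigor'yan--Netrusov--Yau): for $r=\bigl(\vol_n(D)/(\omega_n(1+2c_n))\bigr)^{1/n}$ one maximizes, over all configurations of $m$ balls of radius $r$, the volume of their union inside $D$, and takes the first $m$ for which a fraction $\lambda_n=\frac{1}{1+2c_n}$ of $\vol_n(D)$ is captured; near-maximality together with the packing constant $c_n=9^n$ from Bishop--Gromov then yields \emph{two} sets $A_1,A_2$, each of volume at least $\lambda_n\vol_n(D)$, at mutual distance at least $r$. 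The separating hypersurface is then a good level of the $\tfrac1r$-Lipschitz function $x\mapsto\max\bigl(0,1-d(x,A_1)/r\bigr)$ via the coarea formula, which is the role your pigeonhole over levels of $d(p,\cdot)$ was meant to play. If you want to keep your outline, you must replace the single center $p$ by such an optimal union of $r$-balls (or prove by some other means that a suitable $p$ with $r_2-r_1\gtrsim\vol_n(D)^{1/n}$ exists, which your present argument does not establish).
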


Before proceeding to the proof of this result, let us recall the Bishop-Gromov inequality, which plays a crucial role in the study of manifolds with nonnegative Ricci curvature.

Let $M$ be a complete Riemannian $n$-manifold with nonnegative Ricci curvature.
Then for every $0<r<R$ and $x \in M$, we have
\begin{equation} \label{eq:bishop}
\frac{\vol_n(B(x,R))}{\vol_n(B(x,r))} \leq \left( \frac{R}{r} \right)^n.
\end{equation}
In particular, the volume of every $R$-ball in~$M$ is at most~$\omega_n R^n$, where $\omega_n$ is the volume of a unit ball in the Euclidean $n$-space.

A consequence of the Bishop-Gromov inequality is the following classical packing estimate.

\begin{lemma} \label{lem:packing}
Let $r>0$.
Every ball of radius $2r$ in a complete Riemannian $n$-manifold~$M$ with nonnegative Ricci curvature can be covered by $c_n=9^n$ balls of radius~$r$.
\end{lemma}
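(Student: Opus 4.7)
The plan is to run the classical maximal separated set argument, with the radii chosen so that Bishop--Gromov yields exactly the factor $9^n$. Fix a point $x \in M$ and consider the ball $B(x,2r)$. I would select a maximal $r$--separated subset $\{x_1,\dots,x_N\} \subset B(x,2r)$, i.e.\ a maximal collection of points pairwise at distance greater than $r$. Two consequences are standard: by maximality, no point of $B(x,2r)$ can be at distance $\geq r$ from every $x_i$, so the balls $B(x_i,r)$ cover $B(x,2r)$; and by the separation property, the half--radius balls $B(x_i,r/2)$ are pairwise disjoint.

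The next step is to sandwich the disjoint balls inside a common larger ball to apply Bishop--Gromov. Since each $x_i$ lies in $B(x,2r)$, the triangle inequality gives
\[
B(x_i,r/2) \subset B(x,5r/2) \subset B(x_i,9r/2)
\]
for every $i$. The Bishop--Gromov inequality~\eqref{eq:bishop} applied at $x_i$ with the radii $r/2$ and $9r/2$ then yields
\[
\vol_n(B(x_i,9r/2)) \leq 9^{n}\,\vol_n(B(x_i,r/2)),
\]
and in particular $\vol_n(B(x_i,r/2)) \geq 9^{-n}\,\vol_n(B(x,5r/2))$.

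Summing this lower bound over the $N$ disjoint balls inside $B(x,5r/2)$ gives
\[
N\cdot 9^{-n}\,\vol_n(B(x,5r/2)) \leq \sum_{i=1}^N \vol_n(B(x_i,r/2)) \leq \vol_n(B(x,5r/2)),
\]
hence $N \leq 9^n$. Combined with step one, the balls $B(x_1,r),\dots,B(x_N,r)$ form the desired covering of $B(x,2r)$ by at most $c_n = 9^n$ balls of radius $r$.

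There is no real obstacle here; the only point requiring care is the choice of auxiliary radius $r/2$ (so that the balls are disjoint) together with the outer radius $9r/2$ (so that both $B(x,5r/2)$ is contained in $B(x_i,9r/2)$ and the Bishop--Gromov ratio is $9$), which simultaneously makes the counting work and produces precisely the stated constant $9^n$.
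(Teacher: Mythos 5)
Your proof is correct and follows essentially the same route as the paper: a maximal packing of $r/2$-balls centered in $B(x,2r)$ (equivalently, a maximal $r$-separated set), disjointness inside $B(x,5r/2)$, and Bishop--Gromov applied at a center $x_i$ with radii $r/2$ and $9r/2$ to get the count $9^n$, with maximality giving the covering by the $r$-balls. The choice of radii and the resulting constant coincide exactly with the paper's argument.
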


\begin{proof}
Let $B(x,2r)$ be a ball of radius~$2r$ in~$M$.
Consider a maximal system of disjoint $\frac{r}{2}$-balls in~$M$ with centers~$x_i$ lying in~$B(x,2r)$.
Clearly, the balls~$B(x_i,\frac{r}{2})$ are contained in~$B(x,\frac{5}{2} r)$.
From the Bishop-Gromov inequality~\eqref{eq:bishop}, the number of elements in such maximal system of disjoint balls is at most
\[
\frac{\vol_n(B(x,\frac{5}{2} r))}{\vol_n(B(x_i,\frac{r}{2}))} \leq \frac{\vol_n(B(x_i,\frac{9}{2} r))}{\vol_n(B(x_i,\frac{r}{2}))} \leq 9^n.
\]
On the other hand, by maximality, the balls~$B(x_i,r)$ cover~$B(x,2r)$.
\end{proof}

The following result about the existence of capacitors can be found in~\cite[Lemma 2.2]{CM08} (see also~\cite{Kor93,GY99,GNY04}).
We include a proof for the sake of completeness.

\begin{lemma} \label{lem:capa}
Let $D$ be a bounded domain of a complete Riemannian \mbox{$n$-manifold}~$M$ with nonnegative Ricci curvature.
Let $r=  \left(\frac{ \vol_n(D)}{\omega_n (1+2c_n)}\right)^\frac{1}{n}$ where $c_n=9^n$.
Then there exist two domains $A_1$ and~$A_2$ in~$D$ with $\vol_n(A_i) \geq \frac{1}{1+2c_n} \, \vol_n(D)$ at distance at least~$r$ from each other in~$M$.
\end{lemma}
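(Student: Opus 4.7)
The plan is to build $A_1$ and $A_2$ as the inside and the outside of two concentric metric balls around a well‑chosen point $x_0 \in D$. Fix $x_0$ and introduce the continuous non‑decreasing function
\[
V(t) = \vol_n(B(x_0,t) \cap D),
\]
with $V(0)=0$ and $V(\infty)=\vol_n(D)$. Write $\alpha = \frac{\vol_n(D)}{1+2c_n} = \omega_n r^n$. If I can locate $t_1 \le t_2$ with $V(t_1) \ge \alpha$, $V(t_2) \le \vol_n(D)-\alpha$, and $t_2 - t_1 \ge r$, then taking
\[
A_1 = B(x_0,t_1) \cap D, \qquad A_2 = D \setminus \overline{B}(x_0,t_2)
\]
gives domains of volume at least $\alpha$, and the triangle inequality in $M$ immediately yields $d_M(A_1,A_2) \ge t_2 - t_1 \ge r$. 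So the whole problem reduces to producing such a pair $(t_1,t_2)$.

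The natural attempt is $t_1 = \inf\{t : V(t) \ge \alpha\}$ (so $V(t_1) = \alpha$ by continuity) together with $t_2 = t_1 + r$. Bishop–Gromov \eqref{eq:bishop} shows $V(t) \le \omega_n t^n$, and in particular the condition $V(t_1+r) \le \vol_n(D) - \alpha = 2c_n\,\omega_n r^n$ is automatic as soon as $t_1 \le r\bigl((2c_n)^{1/n}-1\bigr)$. The easiest way to achieve this is to find $x_0$ with $V_{x_0}(r) = \alpha$: then Bishop–Gromov forces $B(x_0,r) \subset D$ up to a null set, $t_1 \le r$, and $V(2r) \le \omega_n(2r)^n = 2^n\alpha \le 2c_n\alpha$, so the construction closes.

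The obstacle is the case where no such $x_0$ exists, i.e.\ $V_x(r) < \alpha$ for every $x \in D$; here $t_1(x_0) > r$ for every base point, and, a priori, the "jump" of $V$ past the threshold $\alpha$ could happen so late that $V(t_1+r)$ already overshoots $\vol_n(D) - \alpha$ and all the mass of $D$ sits in a thin annular shell of width $r$ around $x_0$. To handle this, I would run a covering argument: take a maximal $r$-separated set $\{x_1,\dots,x_N\} \subset D$; its $r$-balls cover $D$, so $\sum_i V_{x_i}(r) \ge \vol_n(D)$, and the assumption $V_{x_i}(r) < \alpha$ forces $N > 1+2c_n$. Lemma~\ref{lem:packing} (and its iteration) bounds the number of these $x_i$ lying inside any ball of fixed radius; the mismatch between this bound and $N$ should either produce some $x_i$ with $V_{x_i}(2r) \ge \alpha$ — whereupon $t_1 \le 2r$ and $V(t_1+r) \le \omega_n(3r)^n = 3^n\alpha \le 2c_n\alpha$ saves the construction — or yield two centers far enough apart that their Voronoi cells in $D$ can be grouped into $A_1$ and $A_2$ directly.

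The hard part is turning this dichotomy into a clean proof; the numerology in the statement (the constants $1+2c_n$ and $c_n = 9^n$) is clearly engineered so that the Bishop–Gromov upper bound $\omega_n t^n$ and the packing bound $c_n$ on the multiplicity of overlap balance against the lower bound $\vol_n(D) = (1+2c_n)\alpha$, and I expect the argument to be a direct pigeonholing of these three estimates rather than anything more delicate.
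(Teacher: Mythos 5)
Your write-up is a plan, not a proof: everything is correctly reduced to finding a center $x_0$ and radii $t_1\le t_2$ with $V(t_1)\ge\alpha$, $V(t_2)\le\vol_n(D)-\alpha$, $t_2-t_1\ge r$, but the case that actually occurs is left unresolved. The ``easy'' case $V_{x_0}(r)=\alpha=\omega_n r^n$ is the equality case of Bishop--Gromov and essentially never happens, and even the relaxed first horn of your dichotomy, $V_{x_i}(2r)\ge\alpha$ for some center of the maximal $r$-separated net, fails in completely ordinary examples: for a long thin tube $D\subset\R^n$ of length $L$ and cross-sectional radius $\eps\ll L$ one has $V_x(2r)\sim \eps^{n-1}r\ll \eps^{n-1}L/(1+2c_n)\sim\alpha$ for every $x$, so the absolute Bishop--Gromov bound $V(t)\le\omega_n t^n$ can never close the argument there (the first radius $t_1$ at which $V(t_1)=\alpha$ is forced to be $\gg r$, and $\omega_n(t_1+r)^n$ then wildly overshoots $\vol_n(D)-\alpha$; what one would need instead is a bound on the volume of the annulus $B(x_0,t_1+r)\setminus B(x_0,t_1)$, which you do not have). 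That pushes all the weight onto the second horn, ``two centers far enough apart whose Voronoi cells can be grouped into $A_1$ and $A_2$ directly,'' and this is not a step toward the lemma but a restatement of it: each Voronoi cell of the net has $D$-volume $<\alpha$, so you must aggregate many cells into two groups of volume $\ge\alpha$ while discarding a buffer of width $\ge r$ whose volume is controlled by $(2c_n-1)\alpha$, and no mechanism for controlling that buffer is given (the lower bound $N>1+2c_n$ on the number of net points and Lemma~\ref{lem:packing} alone do not yield one, since $N$ can be arbitrarily large).

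The missing idea is precisely what the paper supplies: maximize $\Psi_m(x_1,\dots,x_m)=\vol_D\bigl(\bigcup_{i=1}^m B(x_i,r)\bigr)$ over all $m$-tuples, take the smallest $m_0$ with $\max\Psi_{m_0}\ge\lambda_n\vol_n(D)$, and let $A_1$ be the optimal union of $m_0$ balls of radius~$r$; minimality of $m_0$ gives $\vol_n(A_1)\le 2\lambda_n\vol_n(D)$, and---this is the key point---the \emph{maximality} of the chosen configuration lets one bound each of the $c_n$ groups of $m_0$ $r$-balls covering the $2r$-neighborhood $A_1'$ by $\vol_n(A_1)$, so that $\vol_n(A_1')\le 2\lambda_n c_n\vol_n(D)$ and $A_2=D\setminus A_1'$ works. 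Your sketch contains the covering and packing ingredients but not this extremal selection, which is what actually controls the width-$r$ buffer; as written, the proof has a genuine gap at its central step.
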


\begin{proof}
Given a subset~$A$ of~$M$, let 
$
\vol_D(A) = \vol_n(A \cap D).
$
Fix $\lambda_n=\frac{1}{1+2 c_n}$.
For $m \in \N^*$, consider the function $\Psi_m:M^m \to \R$ defined on the $m$-fold product~$M^m$ of~$M$ as
\[
\Psi_m(x_1,\cdots,x_m) = \vol_D \left( \bigcup_{i=1}^m B(x_i,r) \right).
\]
Since $D$ is bounded, the function~$\Psi_m$ attains its maximum at some point~$\mathbf{x}_m=(x_{1,m}, \cdots, x_{m,m})$ in~$M^m$.
Clearly, $\max(\Psi_1) = \Psi_1(\mathbf{x}_1) \leq \omega_n r^n=\lambda_n \vol_n(D)$ from the Bishop-Gromov inequality~\eqref{eq:bishop} and $\max(\Psi_m) = \Psi_m(\mathbf{x}_m)=\vol_n(D)$ for $m$ large enough.
Let $m_0 \in \N^*$ be the smallest integer such that $\max(\Psi_{m_0}) = \Psi_{m_0}(\mathbf{x}_{m_0}) \geq \lambda_n \vol_m(D)$.
Define $A_1 = \bigcup_{i=1}^{m_0} B(x_{i,m_0},r) \cap D$.
Note that 
\[
\vol_n(A_1) \geq \lambda_n \vol_n(D).
\]
We also have
\begin{align*}
\vol_n(A_1) & \leq  \vol_D \left( \bigcup_{i=1}^{m_0 -1} B(x_{i,m_0},r) \right) + \vol_D(B(x_{i,{m_0}},r)) \\
 & \leq \max(\Psi_{m_0-1}) + \max(\Psi_1) \\
 & \leq \lambda_n \vol_n(D) + \lambda_n \vol_n(D) = 2 \lambda_n \vol_n(D).
\end{align*}
By Lemma~\ref{lem:packing}, the set $A'_1= \bigcup_{i=1}^{m_0} B(x_{i,m_0},2r) \cap D$ can be covered by $m_0 \, c_n$ balls of radius~$r$ in~$M$.
The union of these $r$-balls decomposes into the union of $c_n=9^n$ sets of the form $\bigcup_{i=1}^{m_0} B(x_i,r)$ with $x_i \in M$.
Since 
\[
\vol_D \left( \bigcup_{i=1}^{m_0} B(x_i,r) \right) \leq \vol_D \left( \bigcup_{i=1}^{m_0} B(x_{i,m_0},r) \right) = \vol_n(A_1),
\]
we derive
\[
\vol_n(A'_1) \leq c_n \, \vol_n(A_1) \leq 2 \lambda_n c_n \, \vol_n(D).
\]
Thus, for $A_2=D \setminus A'_1$, we obtain
\[
\vol_n(A_2) \geq (1-2\lambda_n c_n) \, \vol_n(D) = \lambda_n \vol_n(D).
\]
By construction, we have $d(A_1,A_2) \geq r$.
Note that we can also smooth out the boundary of~$A_i$.
\end{proof}

We can now derive Theorem~\ref{theo:split1}.

\begin{proof}[Proof of Theorem~\ref{theo:split1}]
Let $A_1$ and $A_2$ be as in Lemma~\ref{lem:capa}.
Denote by~$A_1(r)$ the $r$-neighborhood of~$A_1$ in~$D$.
Note that $A_2$ lies in the complementary set of~$A_1(r)$ in~$D$.
Define a Lipschitz function $f:D \to \R$ as
\[
f(x) = 
\left\{
\begin{array}{ll}
1 & \mbox{if } x \in A_1 \\
1-\frac{d(x,A_1)}{r} & \mbox{if } x \in A_1(r) \setminus A_1 \\
0 & \mbox{if } x \notin A_1(r)
\end{array}
\right.
\]
The function~$f$ is not a Morse function.
However, it can be approximated by a Morse function $f_\varepsilon:D \to \R$ with $\varepsilon >0$ such that
\begin{enumerate}
\item[$\bullet$] $f_\varepsilon(x) >1$ if $x \in A_1$;
\item[$\bullet$] $f_\varepsilon(x) <0$ if $x \notin A_1(r)$;
\item[$\bullet$] $|df_\varepsilon | \leq |df|+\varepsilon$.
\end{enumerate}
Hence, from the coarea formula \cite[13.4]{BZ}, we obtain
\begin{align*}
\int_0^1 \vol_{n-1} (f_\varepsilon^{-1}(t)) \, dt & \leq \int_D |df_\varepsilon | \, dv \\
 & \leq \left(\frac{1}{r}+\varepsilon \right) \, \vol_n(D) \\
 & \leq \left( \omega_n (1+2 c_n) \right)^\frac{1}{n} \, \vol_n(D)^\frac{n-1}{n} + \varepsilon \, \vol_n(D).
\end{align*}
To avoid burdening the argument by epsilontics, we will assume that $\varepsilon=0$.
Thus, there exists a regular value $t_0 \in (0,1)$ such that 
\[
\vol_{n-1} (f_\varepsilon^{-1}(t_0)) \leq \left( \omega_n (1+2 c_n) \right)^\frac{1}{n} \, \vol_n(D)^\frac{n-1}{n}.
\]
The preimage $S_\varepsilon=f_\varepsilon^{-1}(t_0)$ is a smooth hypersurface of~$D$ with boundary lying in~$\partial D$ which decomposes~$D$ into two domains $D_1 = f_\varepsilon^{-1}((-\infty,t_0])$ and $D_2=f_\varepsilon^{-1}([t_0,\infty))$. 
Since $A_i \subset D_i$, we obtain 
\[
\vol_n(D_i) \geq \frac{1}{1+2 c_n} \, \vol_n(D).
\]
The theorem follows with $A_n = \left( \omega_n (1+2 c_n) \right)^\frac{1}{n}$ and $\alpha_n = \frac{1}{1+2c_n}$.
\end{proof}

\section{Splitting manifolds preserving the CW complex structure}

In this section, we establish a version of Theorem~\ref{theo:split1} which preserves the CW complex structure of the manifold with quasi-convex cells.

\begin{theorem} \label{theo:split2}
Fix $\rho >0$ and $\lambda>1$ close enough to~$1$.
Let $M$ be a complete Riemannian $n$-manifold with nonnegative Ricci curvature.
Consider a bounded domain~$D$ in~$M$ endowed with a CW complex structure with quasi-convex cells of size~$(\rho,\lambda)$.
Suppose $D$ has more than one $n$-cell.
Then there exists an $(n-1)$-chain~$S$ of~$D$ with boundary lying in~$\partial D$ which decomposes~$D$ into two $n$-dimensional CW subcomplexes~$D_1$ and~$D_2$ with $\vol_n(D_i) \geq \alpha'_n \vol_n(D)$ such that
\begin{equation} \label{eq:split2}
\vol_{n-1}(S) \leq A'_n \, \vol_n(D)^{\frac{n-1}{n}}
\end{equation}
where $A'_n$ and $\alpha'_n$ are explicit positive constants depending only on~$n$.
\end{theorem}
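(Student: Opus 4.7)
The plan is to apply Theorem~\ref{theo:split1} to extract a smooth separating hypersurface $S_0$ and then to discretize this splitting onto the $(n-1)$-skeleton of the CW complex via cell-averaging and a coarea argument.

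By Theorem~\ref{theo:split1} there exists a smooth hypersurface $S_0 \subset D$ with boundary in $\partial D$ splitting $D$ into $D_{1,0}$ and $D_{2,0}$ with $\vol_n(D_{i,0}) \geq \alpha_n \vol_n(D)$ and $\vol_{n-1}(S_0) \leq A_n \vol_n(D)^{(n-1)/n}$. For each $n$-cell $\Delta_k$, set $\tau_k = \vol_n(\Delta_k \cap D_{1,0})/\vol_n(\Delta_k) \in [0,1]$, and for each threshold $t \in [0,1]$ introduce the $n$-dimensional CW subcomplexes $D_1(t) = \bigcup_{\tau_k \geq t} \Delta_k$ and $D_2(t) = D \setminus D_1(t)$, together with the $(n-1)$-chain $S(t)$ formed of the interior $(n-1)$-faces separating them. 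The first main step is a relative BV-Poincar\'e inequality applied to $u = \mathbf{1}_{D_{1,0}}$ on each union of two adjacent cells $\Delta_k \cup \Delta_j$. Since each such union is $\lambda$-quasi-isometric to a union of two convex Euclidean polyhedra contained in $B^n(6\rho)$ and containing $B^n(2\rho)$, its Poincar\'e constant is controlled by $n$ and $\lambda$. Using the bounds $\vol_n(\Delta_k), \vol_n(\Delta_j) \in [\lambda^{-n}\omega_n(2\rho)^n,\, \lambda^n\omega_n(6\rho)^n]$ and $\vol_{n-1}(F) \leq C \rho^{n-1}$ for the common face $F$, this yields
\begin{equation*}
|\tau_k - \tau_j| \cdot \vol_{n-1}(F) \leq C_n \, \vol_{n-1}(S_0 \cap (\Delta_k \cup \Delta_j)).
\end{equation*}
Summing over all interior $(n-1)$-faces $F$ (each cell belongs to boundedly many adjacencies) produces the coarea identity
\begin{equation*}
\int_0^1 \vol_{n-1}(S(t)) \, dt = \sum_F |\tau_k - \tau_j| \vol_{n-1}(F) \leq C_n' \, \vol_{n-1}(S_0) \leq C_n' A_n \, \vol_n(D)^{\frac{n-1}{n}}.
\end{equation*}

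It remains to find a threshold $t^*$ yielding both a small cut and balanced volumes $\vol_n(D_i(t^*)) \geq \alpha'_n \vol_n(D)$. The step function $h(t) := \vol_n(D_1(t))$ is non-increasing with $\int_0^1 h(t) \, dt = \vol_n(D_{1,0}) \geq \alpha_n \vol_n(D)$. After an arbitrarily small perturbation of the $\tau_k$'s to break ties, the jumps of $h$ are at most $\max_k \vol_n(\Delta_k) \leq (\lambda^{2n} 3^n/N)\,\vol_n(D)$, where $N$ denotes the number of $n$-cells. Once $N$ exceeds a threshold $N_0(n,\lambda)$ this bound is smaller than $(1-2\alpha'_n)\vol_n(D)$, and an intermediate-value argument exhibits a non-trivial interval of thresholds $t$ with balanced volumes; the coarea estimate above then selects a $t^*$ in this interval meeting the area bound. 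The main obstacle I anticipate is the degenerate regime $N \leq N_0(n,\lambda)$, where the balance argument above fails. This case has to be dispatched directly: since the cell volumes and the $(n-1)$-face areas are then all comparable to $\vol_n(D)$ and $\vol_n(D)^{(n-1)/n}$ respectively (with constants depending on $N_0$), a balanced combinatorial bipartition of the $n$-cells --- which exists because $N \geq 2$ and cell volumes are uniformly bounded above and below --- satisfies both conclusions with constants depending only on $n$ and $\lambda$.
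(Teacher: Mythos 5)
Your route is genuinely different from the paper's (which pushes each piece of the hypersurface into the skeleton via minimal fillings, the radial projection of Lemma~\ref{lem:proj} and the monotonicity formula, then controls the volume transfer by the isoperimetric inequality); your cell-averaging/coarea-in-$t$ scheme is a legitimate alternative and most of it can be made to work. Two points, however, need attention, and the second is a genuine gap.

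First, a caveat on the key per-face estimate: the Poincar\'e constant of the glued domain $\Delta_k\cup\Delta_j$ is \emph{not} controlled by $n$ and $\lambda$ alone --- the common face $F$ produced by a Voronoi-type decomposition can be an arbitrarily thin sliver, in which case the union is nearly disconnected and its Poincar\'e constant blows up. Your displayed inequality is nevertheless true, because the factor $\vol_{n-1}(F)$ on the left compensates exactly for this degeneration; but the correct justification is via the $BV$ trace inequality on each convex cell (comparing $\tau_k$ to the mean of the trace of $\mathbf{1}_{D_{1,0}}$ on $F$ from the $\Delta_k$ side, and matching the two traces across $F$ up to $\vol_{n-1}(\partial^* D_{1,0}\cap F)$), not via a Poincar\'e inequality on the union.

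The genuine gap is in the selection of $t^*$. Your intermediate-value argument only shows that the step function $h$ \emph{hits} the balanced band $[\alpha'_n\vol_n(D),(1-\alpha'_n)\vol_n(D)]$; it gives no lower bound on the \emph{length} of the set of thresholds $t$ for which $h(t)$ lies in that band, because that length is governed by the gaps between consecutive values of the $\tau_k$, not by the jump sizes of $h$. If, say, all the $\tau_k$ cluster in $\{\tfrac12-\eta,\tfrac12+\eta\}$ with the two groups each carrying half the volume, the balanced band is attained only for $t$ in an interval of length $2\eta$, and averaging the coarea integral over it produces a constant of order $1/\eta$, which is unbounded. So "the coarea estimate then selects a $t^*$ in this interval meeting the area bound" does not follow. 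The fix requires an extra ingredient you have not supplied: by the relative isoperimetric inequality in each convex cell, $\sum_k\min(\tau_k,1-\tau_k)\,\vol_n(\Delta_k)\leq C_n\,\rho\,\vol_{n-1}(S_0)$, which shows that for \emph{every} $t$ in a fixed interval such as $[\tfrac14,\tfrac34]$ one has $|\vol_n(D_1(t))-\vol_n(D_{1,0})|\leq C_n\,\rho\,\vol_{n-1}(S_0)$; once $N\geq N_0(n,\lambda)$ this error is at most $\tfrac{\alpha_n}{2}\vol_n(D)$ (since $\rho\lesssim (\vol_n(D)/N)^{1/n}$ by~\eqref{eq:N}), so \emph{all} $t\in[\tfrac14,\tfrac34]$ are balanced and you may average the coarea integral over an interval of length $\tfrac12$. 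With this replacement (and your small-$N$ case, which is fine and matches the paper's), the proof closes.
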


\begin{proof}
For the sake of simplicity, we will assume that $\lambda=1$, that is, the $n$-cells~$\Delta$ of the CW complex decomposition of~$D$ are flat.
The general case only requires technical changes to keep track of the bi-Lipschitz factor.

From Theorem~\ref{theo:split1}, the domain~$D$ decomposes into two domains~$D_1$ and~$D_2$ such that their common boundary~$S=\partial D_1=\partial D_2$ satisfies
\[
\vol_{n-1}(S) \leq A_n \, \vol_n(D)^\frac{n-1}{n}.
\]
Without loss of generality, we can assume that no connected component of~$D_1$ and~$D_2$ lies in a single $n$-cell of the CW complex structure of~$D$.
This can be achieved by translating such connected components if necessary.

\medskip

We want to deform the hypersurface~$S$ into the $(n-1)$-skeleton of the CW complex decomposition of~$D$ while controlling its $(n-1)$-volume and the $n$-volume of the two domains of~$D$ it bounds.
By perturbing the hypersurface~$S$ if necessary, we can assume that it intersects transversally the $(n-1)$-cells of~$D$ and decomposes its $n$-cells into finitely many domains.
Thus, the hypersurface~$S$ decomposes into finitely many pieces~$(S_i)_{i \in I}$ obtained as the connected components of the intersection of~$S$ with the $n$-cells of the CW complex decomposition of~$D$.
For every~$i \in I$, denote by~$\Delta_i$ the $n$-cell of~$D$ in which $S_i$ lies.
By construction, $\partial S_i$ lies in~$\partial \Delta_i$.
We also have
\[
\vol_{n-1}(S) = \sum_{i \in I} \vol_{n-1}(S_i).
\]
Consider a volume-minimizing $(n-1)$-chain~$S'_i$ in~$\Delta_i$ with the same boundary as~$S_i$.

If~$S'_i$ is at distance at least~$\rho$ from the center of~$\Delta_i$, we replace~$S_i$ with the image~$S''_i$ of~$S'_i$ by the projection~$\pi_{\Delta_i}$ given by Lemma~\ref{lem:proj}.
The new $(n-1)$-chain~$S''_i$ lies in~$\partial \Delta_i$, has the same boundary as~$S_i$ and satisfies 
\[
\vol_{n-1}(S''_i) \leq 36^{n-1} \, \vol_{n-1}(S_i)
\]
from Lemma~\ref{lem:proj}.

If some point~$x'_i$ of~$S'_i$ is at distance at most~$\rho$ from the center of~$\Delta_i$, we replace~$S_i$ with a volume-minimizing $(n-1)$-chain~$S''_i$ in~$\partial \Delta_i$ with the same boundary as~$S_i$.
Note that $\vol_{n-1}(S''_i) \leq \frac{1}{2} \vol_{n-1}(S^{n-1}(6 \rho))$.
Since $S'_i$ is a minimal $(n-1)$-chain of~$\Delta_i$ with boundary lying in~$\partial \Delta_i$, the volume of the intersection of~$S'_i$ with the $\rho$-ball of~$\Delta_i$ centered at~$x'_i$ is at least~$\omega_{n-1} \, \rho^{n-1}$ from the monotonicity formula, \cf~\cite[\S9]{mor}.
That is,
\[
\vol_{n-1}(S'_i) \geq \vol_{n-1}(S'_i \cap B(x'_i,\rho)) \geq \omega_{n-1} \, \rho^{n-1}.
\]
Hence,
\[
\vol_{n-1}(S''_i) \leq \tfrac{1}{2} \vol_{n-1}(S^{n-1}(6\rho)) =  \tfrac{1}{2} n \, \omega_n \, 6^{n-1} \, \rho^{n-1} \leq \frac{n \, \omega_n \, 6^{n-1}}{2 \, \omega_{n-1}} \vol_{n-1}(S'_i).
\]

In both case, we obtain an $(n-1)$-chain~$S''_i$ in~$\partial \Delta_i$ with the same boundary as~$S_i$ such that
\[
\vol_{n-1}(S''_i) \leq C_{n-1} \, \vol_{n-1}(S_i)
\]
where $C_{n-1} = \max \{ 36^{n-1}, \frac{n \, \omega_n \, 6^{n-1}}{2 \, \omega_{n-1}}  \}$. 

Replacing~$S_i$ with~$S''_i$ gives rise to an $(n-1)$-chain~$S''=\cup_{i \in I} S''_i$ lying in the $(n-1)$-skeleton of the CW complex decomposition of~$D$, with boundary~$\partial S''$ lying in~$\partial D$.
This $(n-1)$-chain bounds two $n$-dimensional CW subcomplexes~$D''_1$ and~$D''_2$ in~$D$.
Furthermore, the volume of~$S''$ is bounded from above as follows
\begin{equation} \label{eq:S''}
\vol_{n-1}(S'') \leq C_{n-1} \, \vol_{n-1}(S) \leq C_{n-1} A_n \, \vol_n(D)^\frac{n-1}{n}.
\end{equation}
Hence, the bound~\eqref{eq:split2} for $A'_n=\max \{C_{n-1} A_n, n \, \omega_n^\frac{1}{n} \, 3^{n-1} \}$. 
(The reason for the second argument in the max will become clear later.)

\medskip

It remains to bound from below the volume of the two CW subcomplexes $D''_1$ and~$D''_2$ bounded by~$S''$.
By the isoperimetric inequality, the volume of the region~$R_i$ of~$\Delta_i$ bounded by~$S_i$ and~$S''_i$ satisfies
\begin{align*}
\vol_n(R_i) & \leq I_n \left( \vol_{n-1}(S_i) + \vol_{n-1}(S''_i) \right)^\frac{n}{n-1} \\
 & \leq I_n (1+C_{n-1})^\frac{n}{n-1} \, \vol_{n-1}(S_i)^\frac{n}{n-1},
\end{align*}
where $I_n=\left( \frac{1}{n^n \omega_n} \right)^\frac{1}{n-1}$ is the $n$-dimensional isoperimetric constant in~$\R^n$. \\ 
Since $R_i \subset \Delta_i \subset B^n(6 \rho)$, we also have
\[
\vol_n(R_i) \leq \vol_n(\Delta_i) \leq \vol_n(B^n(6 \rho)) = \omega_n \, 6^n \, \rho^n.
\]

Now, let $I_-$ be the set of all the indices~$i \in I$ such that $\vol_{n-1}(S_i) \leq \rho^{n-1}$ and $I_+=I \setminus I_-$ be the set of all the indices~$i \in I$ such that $\vol_{n-1}(S_i) > \rho^{n-1}$.
By substracting up the volume change of~$D''_j \cap \Delta_i$ through the replacement process for every $n$-cell~$\Delta_i$ of the CW complex decomposition of~$D$, we obtain
\begin{align*}
\vol_n(D''_j) & \geq \vol_n(D_j) - \sum_{i \in I_-} I_n (1+C_{n-1})^\frac{n}{n-1} \, \vol_{n-1}(S_i)^\frac{n}{n-1} - \sum_{i \in I_+}  \omega_n \, 6^n \, \rho^n \\
 & \geq \vol_n(D_j) - \sum_{i \in I_-} I_n (1+C_{n-1})^\frac{n}{n-1} \rho \, \vol_{n-1}(S_i) - \sum_{i \in I_+} \omega_n  \, 6^n \, \rho \, \vol_{n-1}(S_i) \\
 & \geq \vol_n(D_j) - K_n \, \rho \, \vol_{n-1}(S)
\end{align*}
where $K_n = \max \{ I_n(1+C_{n-1})^\frac{n}{n-1},\omega_n \, 6^n\}$. 

\medskip

Suppose the number~$N$ of $n$-cells in the CW complex decomposition of~$D$ is greater or equal to~$N_0=\frac{1}{\omega_n} \left( \frac{K_n A'_n}{\alpha_n} \right)^n$.
From the volume estimate~\eqref{eq:N}, we have $\rho \leq \frac{1}{2} \left( \frac{\vol_n(D)}{N_0 \omega_n} \right)^\frac{1}{n} = \frac{\alpha_n}{2K_n A'_n} \, \vol_n(D)^\frac{1}{n}$.
Combined with the bound~\eqref{eq:S''}, this implies that 
\[
K_n \, \rho \, \vol_{n-1}(S'') \leq \tfrac{\alpha_n}{2} \, \vol_n(D) \leq \tfrac{1}{2} \, \vol_n(D_j).
\]
Thus, 
\[
\vol_n(D''_j) \geq \tfrac{1}{2} \, \vol_n(D_j) \geq \tfrac{\alpha_n}{2} \, \vol_n(D).
\]

If $N$ is less than $N_0$, we can take for $D''_1$ any $n$-cell of~$D$ and for $D''_2$ the union of the remaining $n$-cells.
In this case, still from~\eqref{eq:N}, we deduce that
\[
\vol_{n-1}(\partial D''_j) \leq \vol_{n-1}(S^{n-1}(6 \rho)) = n \, \omega_n \, 6^{n-1} \, \rho^{n-1} \leq n \, \omega_n^\frac{1}{n} \, 3^{n-1} \, \vol_n(D)^\frac{n-1}{n}
\]
where $S^{n-1}(6 \rho)$ is the $6 \rho$-sphere centered at the origin of~$\R^n$.
By our choice of~$A'_n$, the bound~\eqref{eq:split2} is still satisfied.
We also derive from~\eqref{eq:N} that
\[
\vol_n(D''_j) \geq \vol_n(B^n(\rho)) = \omega_n \, \rho^n \geq \frac{1}{6^n N_0} \vol_n(D) = \omega_n \left( \frac{\alpha_n}{6 K_n A'_n} \right)^n \vol_n(D).
\]

In both cases, the result follows by renaming~$D''_j$ into~$D_j$ with~$\alpha'_n = \min \left\{\frac{\alpha_n}{2}, \omega_n \left( \frac{\alpha_n}{6 K_n A'_n} \right)^n \right\}$.
\end{proof}

\begin{remark}
By the simplicial approximation theorem, we can assume that $S$ is an $(n-1)$-dimensional simplicial chain of a simplicial subdivision of~$D$ with boundary lying in~$\partial D$ which decomposes~$D$ into two $n$-dimensional simplicial subcomplexes $D_1$ and~$D_2$ satisfying the same geometric estimates.
\end{remark}

\section{Merging sweep-outs}

For the next result, we need to introduce a min-max value for sweep-outs defined in terms of PL Morse functions.

\begin{definition} 
Let $D$ be a piecewise flat simplicial $n$-complex homeomorphic to a compact $n$-manifold with (possibly empty) boundary.
A function $f:D \to \R$ is a \emph{PL Morse function} if it is linear in restriction to every simplex of~$D$ and takes pairwise distinct values at the vertices of~$D$.
Note that a PL Morse function is uniquely determined by its values on the set of vertices of~$D$.

\medskip

The $(n-1)$-volume width of a PL Morse function~$f:D \to \R$ is defined as
\[
\A(f) = \sup_t \vol_{n-1}(f^{-1}(t)).
\]
Similarly, the $(n-1)$-volume width of~$D$ is defined as
\[
\A(D) = \inf_f \A(f) = \inf_f \sup_t \vol_{n-1}(f^{-1}(t))
\]
where $f$ runs over all PL Morse functions on some simplicial subdivision of~$D$.

\medskip

When $D$ is a compact Riemannian $n$-manifold with (possibly empty) piecewise smooth boundary and no fixed underlying simplicial complex structure, we extend the notion of PL Morse function as follows.
A function $f:D \to \R$ is a \emph{(generalized) PL Morse function} if there exists a simplicial complex structure on~$D$ for which $f$ is a PL Morse function.
\end{definition}

\forget
\begin{definition} \label{def:width}
Let $D$ be a compact Riemannian $n$-manifold with (possibly empty) piecewise smooth boundary.
The $(n-1)$-volume width of a Morse function~$f:D \to \R$ is defined as
\[
\A(f) = \sup_t \vol_{n-1}(f^{-1}(t)).
\]
Similarly, the $(n-1)$-volume width of~$D$ is defined as
\[
\A(D) = \inf_f \A(f) = \inf_f \sup_t \vol_{n-1}(f^{-1}(t))
\]
where $f$ runs over all Morse functions on~$D$.
\end{definition}
\forgotten

The following estimate on the $(n-1)$-volume width of a bounded domain in terms of the $(n-1)$-volume widths of the subdomains it is made of results from a cut-and-paste argument.

\begin{proposition} \label{prop:merging}
Let $D$ be a piecewise flat simplicial $n$-complex homeomorphic to a compact $n$-manifold with (possibly empty) boundary.
Let $S$ be an $(n-1)$-chain of~$D$ with boundary lying in~$\partial D$ which decomposes~$D$ into two $n$-dimensional simplicial subcomplexes~$D_1$ and~$D_2$.
Then 
\[
\A(D) \leq \max \{\A(D_1),\A(D_2) \} + 2n \, \vol_{n-1}(S).
\]
\end{proposition}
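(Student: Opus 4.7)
The plan is to take near-optimal PL Morse functions on $D_1$ and $D_2$, rescale them to have disjoint ranges, and glue them into a single PL Morse function on $D$ using a thin simplicial collar around $S$. The excess sweep-out volume contributed by the collar will be controlled via a simplex slicing estimate producing the factor $2n\,\vol_{n-1}(S)$.

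For any $\eps>0$, I would first select PL Morse functions $f_i:D_i\to\R$ on simplicial subdivisions of $D_i$ (chosen on a common refinement so that $S$ remains a subcomplex on both sides) with $\A(f_i)<\A(D_i)+\eps$. Composing each $f_i$ with an orientation preserving PL homeomorphism of $\R$ preserves the level sets up to reparametrization and therefore does not affect $\A(f_i)$, so I would arrange $f_1(D_1)\subset[0,1]$ and $f_2(D_2)\subset[2,3]$.

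I would then refine $D$ by introducing, for each vertex $v$ of $S$, a doubled copy pushed slightly into each $D_i$, and retriangulating every $n$-simplex of $D$ adjacent to $S$ into a body simplex (combinatorially equivalent to the original $n$-simplex) together with a prism-shaped collar triangulated in the standard way into $n$ sub-simplices. On this refined complex, I would define $f:D\to[0,3]$ by assigning the value $f_i(v)$ to each doubled copy of $v\in S$ sitting in $D_i$, the original $f_i$-value to the interior vertices of $D_i$, and a common intermediate value in $(1,2)$ to each vertex of $S$ itself. A small perturbation would make all vertex values pairwise distinct, so that $f$ becomes a PL Morse function on the subdivision.

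Finally, I would estimate $\A(f)$ by splitting on the value of $t$. For $t\in[0,1]$ the level set $f^{-1}(t)$ is essentially a level set of $f_1$ in the body region of $D_1$, hence has $(n-1)$-volume at most $\A(f_1)<\A(D_1)+\eps$; the case $t\in[2,3]$ is symmetric. For $t\in(1,2)$ the level set lies entirely in the collar, and inside each $n$-simplex meeting $S$ it is an affine hyperplane slice. A simplex slicing estimate for such slices, combined with the fact that in a simplicial $n$-manifold each $(n-1)$-simplex of $S$ bounds exactly two $n$-simplices of $D$ (one on each side), would yield $\vol_{n-1}(f^{-1}(t))\leq 2n\,\vol_{n-1}(S)$, and letting $\eps\to0$ would give the proposition. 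The hardest part will be extracting the exact factor $2n$: the factor $2$ is clear from the two $n$-simplices meeting along each $(n-1)$-face of $S$, but the factor $n$ requires a careful hyperplane cross section bound for a single $n$-simplex, since such a simplex may have up to $n$ distinct $(n-1)$-faces lying in $S$, each contributing to the slice. A secondary technical point is carrying out the collar triangulation so that $f$ is genuinely PL Morse and so that the level sets for $t\notin(1,2)$ pick up only negligible contributions from the collar.
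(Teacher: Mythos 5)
Your plan is essentially the paper's own proof: take near-optimal PL Morse functions on $D_1$ and $D_2$ with separated ranges, build a thin collar along $S$ by doubling/truncating the simplices that meet $S$, send the vertices of $S$ to intermediate values, and let the collar thickness $\varepsilon$ tend to zero. The step you flag as the hardest -- where the factor $2n$ comes from -- is settled in the paper in a way that does not involve simplices having several $(n-1)$-faces in $S$: after truncation, each collar cell meeting $S$ along an $(n-1)$-simplex $\tau$ is a prism with $2n$ vertices, subdivided without new vertices into $n$ simplices; since the glued function is linear on each of these, a level set meets each one in a piece of a hyperplane section of the prism, and any hyperplane section of a prism of thickness $\varepsilon$ over $\tau$ has $(n-1)$-volume at most $\eta_\varepsilon\,\vol_{n-1}(\tau)$ with $\eta_\varepsilon\to 1$ as $\varepsilon\to 0$. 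Summing over the $n$ simplices of each prism and over the two prisms (one in $D_1$, one in $D_2$) sitting over each $(n-1)$-simplex of $S$ gives $2n\,\eta_\varepsilon\,\vol_{n-1}(S)$, while cells meeting $S$ only along faces of dimension $\leq n-2$ (your ambiguous ``small'' cases) contribute an error of the form $n^2 N_s\,\delta_\varepsilon$ that vanishes with $\varepsilon$. One bookkeeping correction: for $t$ in the range of $f_1$ (say $t$ just below the gluing values) the level set is \emph{not} confined to the body region and its collar part is not negligible -- it can sweep across the collar with volume comparable to $\vol_{n-1}(S)$ -- so rather than splitting into exclusive cases one should bound, for every $t$,
\[
\vol_{n-1}\bigl(f^{-1}(t)\bigr) \;\leq\; \max_i \vol_{n-1}\bigl(f_i^{-1}(t)\bigr) \;+\; \vol_{n-1}\bigl(f^{-1}(t)\cap \mathcal{V}\bigr),
\]
where $\mathcal{V}$ is the collar, with the second term bounded uniformly in $t$ by $2n\,\eta_\varepsilon\,\vol_{n-1}(S)+n^2N_s\,\delta_\varepsilon$; letting $\varepsilon\to 0$ then gives exactly the stated inequality.
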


\begin{remark}
With extra care in the proof, it seems likely that the $2n$ factor can be dropped in the conclusion of Proposition~\ref{prop:merging}.
\end{remark}

\begin{proof}
The boundary of the $(n-1)$-chain~$S$ decomposes~$\partial D$ into two simplicial subcomplexes~$S_1$ and~$S_2$ with~$S_i$ lying in~$\partial D_i$.
Therefore, the boundary of~$D_i$ decomposes into~$\partial D_i = S \cup S_i$.

Let $f_i:D_i \to \R$ be a PL Morse function defined on a simplicial subdivision of~$D_i$.
By adding constants to $f_1$ and~$f_2$ if necessary, we can assume that 
\[
\max_{D_1} f_1 < -1 < 1 < \min_{D_2} f_2.
\]

We want to construct a PL Morse function $g:D \to \R$ defined on a simplicial subdivision of~$D$ by deforming $f_1$ and~$f_2$ in the neighborhood of~$S$ into two other PL Morse functions which agree on~$S$.
The new PL Morse function $g:D \to \R$ should satisfy
\[
\vol_{n-1}(g^{-1}(t)) \leq \vol_{n-1}(f_i^{-1}(t)) + \vol_{n-1}(S) + \delta
\]
for every $t \in \R$, where $\delta$ is an arbitrarily small error term.

Let $\varepsilon \in (0,1)$ small enough.
From~\cite[p.~122]{spa}, we can assume that the simplicial subdivisions on~$D_1$ and~$D_2$ arise from the same simplicial subdivision of~$D$.
The $n$-simplices of~$D_i$ meeting~$S$ form a (closed) neighborhood~$\mathcal{U}_i$ of~$S$ in~$D_i$.
Let $e$ be an edge of this neighborhood with exactly one endpoint, say~$p$, lying in~$S$.
Denote by~$p_e$ the point of~$e$ such that $d(p,p_e) = \varepsilon \, \length(e)$.
Note that for $\varepsilon$ small enough, we can assume that the values taken by~$f_i$ at the points~$p_e$ are pairwise distinct.

Now, we truncate every $n$-simplex~$\sigma$ of~$\mathcal{U}_i$ along the hyperplane passing through the points~$p_e$ where $e$ is an edge of~$\sigma$.
This gives rise to a CW complex structure of~$\mathcal{U}_i$ (finer than the one given by the initial triangulation) whose $n$-cells are convex Euclidean polyhedra, namely truncated $n$-simplices.

Denote by~$\mathcal{V}_i$ the (closed) neighborhood of~$S$ in~$\mathcal{U}_i$ formed of the $n$-cells (\ie, truncated $n$-simplices) of~$D_i$ meeting~$S$.
Every $n$-cell of~$\mathcal{V}_i$ meets~$S$ along a $k$-simplex with $k \leq n-1$.
An $n$-cell~$\Delta$ of~$\mathcal{V}_i$ is said to be \emph{big} if it meets~$S$ along an $(n-1)$-simplex~$\Delta \cap S$ and \emph{small} otherwise.
Observe that every hyperplane intersects a small $n$-cell (viewed as a convex Euclidean polyhedron) of~$\mathcal{V}_i$ along a region of $(n-1)$-volume at most~$\delta_\varepsilon$, where $\delta_\varepsilon \to 0$ when $\varepsilon$ goes to zero.
Similarly, every hyperplane intersects a big $n$-cell~$\Delta$ of~$\mathcal{V}_i$ along a region of $(n-1)$-volume at most~$\eta_\varepsilon \, \vol_{n-1}(\Delta \cap S)$, where $\eta_\varepsilon \to 1$ when $\varepsilon$ goes to zero.

Without introducing new vertices, we define a new triangulation of~$D_i$ by subdividing the $n$-cells (convex Euclidean polyhedra) of~$\mathcal{U}_i$ into $n$-simplices.
In the process, big $n$-cells (which have exactly $2n$ vertices) are split into $n$ simplices of maximal dimension and small $n$-cells (which have at most $n^2$ vertices) are split into at most $n^2$ simplices of maximal dimension.

We define a PL Morse function $g:D \to \R$ with respect to the new triangulation of~$D$ which agrees with $f_i$ on the vertices of~$D_i \setminus S$ and takes the vertices of~$S$ to pairwise distinct values close to zero in~$(-1,1)$.

The level sets of~$g$ are transverse to the $(n-1)$-faces of the triangulation of~$D$ and intersect each $n$-simplex of~$\mathcal{V}=\mathcal{V}_1 \cup \mathcal{V}_2$ along a (possibly empty) hyperplane.
The volume estimates on these hyperplanes and the bounds on the maximal number of $n$-simplices in an $n$-cell yield the following estimate
\begin{align}
\vol_{n-1}(g^{-1}(t) \cap \mathcal{V}) & \leq \sum_{\sigma \mbox{\footnotesize{ big}}} \vol_{n-1}(g^{-1}(t) \cap \sigma) + \sum_{\sigma \mbox{\footnotesize{ small}}} \vol_{n-1}(g^{-1}(t) \cap \sigma) \nonumber \\
 & \leq \sum_{\sigma \mbox{\footnotesize{ big}}} n \, \eta_\varepsilon \, \vol_{n-1}(S \cap \sigma) + \sum_{\sigma \mbox{\footnotesize{ small}}} n^2 \, \delta_\varepsilon \nonumber \\
 & \leq 2n \, \eta_\varepsilon \, \vol_{n-1}(S) + n^2 N_s \, \delta_\varepsilon \label{eq:level} \\
 & \lesssim 2n \, \vol_{n-1}(S) \nonumber
\end{align}
where the first sum is over the $n$-simplices lying in a big $n$-cell of~$\mathcal{V}$, the second sum is over the $n$-simplices lying in a small $n$-cell of~$\mathcal{V}$ and $N_s$ is the number of $n$-simplices lying in a small $n$-cell of~$\mathcal{V}$.

The level sets of~$g$ satisfy the following properties for different values of~$t$.
For $t <-1$, the level set $g^{-1}(t) \setminus \mathcal{V}$ lies in~$D_1$ and agrees with $f_1^{-1}(t) \setminus \mathcal{V}_1$, while for $t >1$, it lies in~$D_2$ and agrees with $f_2^{-1}(t) \setminus \mathcal{V}_2$.
Finally, for $t \in [-1,1]$, the level set $g^{-1}(t)$ lies in~$\mathcal{V}$.

From this, we immediately derive
\[
\vol_{n-1}(g^{-1}(t)) \leq \max \{ \vol_{n-1}(f_1^{-1}(t)), \vol_{n-1}(f_2^{-1}(t)) \} + \vol_{n-1}(g^{-1}(t) \cap \mathcal{V}).
\]
Combined with~\eqref{eq:level}, we obtain
\[
\A(g) \leq \max \{ \A(f_1), \A(f_2) \} + 2n \, \eta_\varepsilon \, \vol_{n-1}(S) + n^2 N \, \delta_\varepsilon.
\]
The desired result follows by letting $\varepsilon$ go to zero, namely
\[
\A(D) \leq \max \{ \A(D_1), \A(D_2) \} + 2n \, \vol_{n-1}(S).
\]
\end{proof}

\forget
\begin{proof}
The boundary of the $(n-1)$-chain~$S$ decomposes~$\partial D$ into two parts~$S_1$ and~$S_2$ with~$S_i$ lying in~$\partial D_i$.
Therefore, the boundary of~$D_i$ decomposes into~$\partial D_i = S \cup S_i$.

Let $f_i:D_i \to \R$ be a Morse function.
We want to deform~$f_i$ in the neighborhood of~$S$ into another Morse function~$g_i:D_i \to [0,1]$ such that the level set~$g_i^{-1}(1)$ agrees with~$S$ and
\[
\vol_{n-1}(g_i^{-1}(t)) \leq \vol_{n-1}(f_i^{-1}(t)) + \vol_{n-1}(S) + \delta
\]
for every $t \in [0,1]$, where $\delta$ is an arbitrarily small error term.

Without loss of generality, we can assume that $f_i$ has no critical point in the neighborhood of~$\partial D_i$, that its level sets transversally meet~$\partial D_i$ and that the range of~$f_i$ lies between $0$ and~$1$.
Consider a (closed) neighborhood of~$S$ in~$D_i$ quasi-isometric to~$S \times [0,\varepsilon]$ with $\varepsilon>0$ small enough.
Up to reparametrisation, we will identify this neighborhood with~$S \times [0,1]$ and $S$ with~$S \times \{1\}$, \cf~Figure~\ref{fig:boundary}.

\begin{figure}[htb] 
\caption{Level sets in the neighborhood of~$S$} \label{fig:boundary}
\end{figure}

Now, let us define $g_i:D_i \to [0,1]$ as follows: outside~$S \times [0,1]$, the function~$g_i$ agrees with~$f_i$, while on~$S \times [0,1]$, it is given by
\[
g_i(s,\tau) = \left\{
\begin{array}{cl}
f_i(s) & \mbox{if } \tau \leq f_i(s) \\
\tau & \mbox{if } \tau > f_i(s)
\end{array}
\right.
\]
for every $(s,\tau) \in S \times [0,1]$.
Since $f_i$ is nonnegative, the functions~$f_i$ and~$g_i$ agree on~$S \times \{0\}$.
Furthermore, by construction, we have
\[
g_i^{-1}(t) = (f_i)^{-1}_{D_i \setminus S \times [0,1]}(t) \cup \left( (f_i)^{-1}_{S \times \{0\}}(t) \times [0,t] \right) \cup \left( (f_i)^{-1}_{S \times \{0\}}([0,t]) \times \{ t \} \right)
\]
for every $t \in [0,1]$, \cf~Figure~\ref{fig:boundary}.

By taking $\varepsilon$ sufficiently small, the $(n-1)$-volume of the second term of~$g_i^{-1}(t)$, namely~$(f_i)^{-1}_{S \times \{0\}}(t) \times [0,t]$, can be made arbitrarily small.
Similarly, the $(n-1)$-volume of the last term of~$g_i^{-1}(t)$, namely \mbox{$(f_i)^{-1}_{S \times \{0\}}([0,t]) \times \{ t \}$}, is bounded from above by the $(n-1)$-volume of~$S$ up to an arbitrarily small error term.
Now, we can perturb~$g_i$ into a Morse function, still denoted by~$g_i$, with values in~$[0,1]$, without increasing too much the $(n-1)$-volume of its level sets while making sure that $S=g_i^{-1}(1)$.
By construction, we have 
\[
\A(g_i) \leq \A(f_i) + \vol_{n-1}(S) + \delta.
\]

\medskip

Let $f:D \to \R$ be a Morse function.
Consider the functions~$g_i:D_i \to [0,1]$ previously constructed from the restrictions~$f_i$ of~$f$ to~$D_i$.
Note that both $g_1$ and~$g_2$ (and so $2-g_2$) are constant equal to~$1$ on~$S$.
Define $g:D \to \R$ as
\[
g(x) = \left\{
\begin{array}{ll}
g_1(x) & \mbox{if } x \in D_1 \\
2-g_2(x) & \mbox{if } x \in D_2
\end{array}
\right.
\]
Slightly perturbing~$g$ in the neighborhood of~$S$, we can assume that it is a Morse function.
By construction, we have
\[
\A(g) \leq \A(f) + \vol_{n-1}(S) + \delta
\]
for some arbitrarily small error term~$\delta$, which yields the desired result.
\end{proof}
\forgotten

\section{Main theorem}

We can now prove the main theorem of this article.

\begin{theorem} \label{theo:main}
Let $M$ be a complete Riemannian $n$-manifold with nonnegative Ricci curvature.
For every bounded domain~$D$ in~$M$, there exists a PL Morse function $f:D \to \R$ such that
\[
\sup_t \vol_{n-1}(f^{-1}(t)) \leq C_n \, \vol_n(D)^\frac{n-1}{n}
\]
where $C_n$ is an explicit positive constant depending only on~$n$.
\end{theorem}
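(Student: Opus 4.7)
The plan is to combine Proposition~\ref{prop:existence} and Proposition~\ref{prop:merging} with Theorem~\ref{theo:split2} via an induction on the number $N$ of top-dimensional cells of a suitable CW structure. First, I would invoke Proposition~\ref{prop:existence} to equip $D$ (after smoothing $\partial D$ if needed) with a CW complex structure with quasi-convex cells of size $(\rho,\lambda)$, where $\rho$ is chosen small enough that $\lambda$ is as close to $1$ as required; all constants below depend only on $n$. By the Remark following Theorem~\ref{theo:split2}, we may assume throughout that the decomposition is piecewise flat simplicial, which is precisely the setting in which Proposition~\ref{prop:merging} is phrased. The claim to prove by induction on $N$ is that the $(n-1)$-volume width satisfies $\A(D)\leq C_n\,\vol_n(D)^{(n-1)/n}$ for a constant $C_n$ to be fixed at the end.

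For the base case $N=1$, the domain $D$ consists of a single $n$-cell which, by Definition~\ref{def:cellstruc}, is $\lambda$-quasi-isometric to a convex Euclidean polyhedron $\mathcal{E}$ lying in $B^n(6\rho)$ and containing $B^n(2\rho)$. A generic affine function on $\mathcal{E}$, pulled back through the characteristic map and refined by any triangulation so as to attain pairwise distinct values at the vertices, defines a PL Morse function on $D$ whose level sets correspond to convex hyperplane sections of $\mathcal{E}$. Each such section has $(n-1)$-volume at most $\omega_{n-1}(6\rho)^{n-1}$, up to a $\lambda^{n-1}$ distortion. Combined with the lower bound $\vol_n(D)\geq \lambda^{-n}\omega_n(2\rho)^n$ coming from~\eqref{eq:N}, this yields a uniform base bound $\A(D)\leq K_n\,\vol_n(D)^{(n-1)/n}$ for an explicit $K_n$.

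For the inductive step, assume the bound holds for all CW structures with fewer than $N$ cells and let $D$ have $N\geq 2$ cells. Theorem~\ref{theo:split2} produces a separating $(n-1)$-chain $S$ and a decomposition $D=D_1\cup D_2$ into CW subcomplexes, each with strictly fewer $n$-cells and with $\vol_n(D_i)\leq (1-\alpha'_n)\vol_n(D)$, while $\vol_{n-1}(S)\leq A'_n\vol_n(D)^{(n-1)/n}$. Applying the induction hypothesis to $D_1$ and $D_2$ and invoking Proposition~\ref{prop:merging} yields
\[
\A(D)\;\leq\;\bigl[\,C_n(1-\alpha'_n)^{(n-1)/n}+2nA'_n\,\bigr]\,\vol_n(D)^{(n-1)/n}.
\]
The induction closes as soon as $C_n$ is chosen so that
\[
C_n\;\geq\;\max\!\left\{K_n,\;\frac{2nA'_n}{1-(1-\alpha'_n)^{(n-1)/n}}\right\},
\]
which is a finite quantity depending only on $n$, and the theorem follows.

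In my view, the main obstacle lies less in any single step than in making the base case estimate compatible with the recursive estimate through a single constant $C_n$, while tracking how the quasi-isometric distortion $\lambda$ affects each application of Theorem~\ref{theo:split2} and Proposition~\ref{prop:merging} at every level of the induction. A secondary subtlety is that each split produces a new simplicial subdivision, so one must verify (using the Remark after Theorem~\ref{theo:split2}, and possibly a common refinement) that the simplicial structures on $D_1$ and $D_2$ remain in the piecewise flat simplicial setting demanded by Proposition~\ref{prop:merging} at every stage of the recursion.
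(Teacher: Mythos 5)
Your proposal is correct and follows essentially the same route as the paper: Proposition~\ref{prop:existence} to get the quasi-convex cell structure, a single-cell base case swept by parallel hyperplane sections, Theorem~\ref{theo:split2} to split, Proposition~\ref{prop:merging} to merge, and induction on the number of $n$-cells with $C_n = \max\bigl\{K_n, \tfrac{2nA'_n}{1-(1-\alpha'_n)^{(n-1)/n}}\bigr\}$. The only (immaterial) difference is bookkeeping: you apply the induction hypothesis directly to $D_1,D_2$ to close a fixed constant, whereas the paper compares the normalized ratios $\A(D_i)/\vol_n(D_i)^{(n-1)/n}$ with $\A(D)/\vol_n(D)^{(n-1)/n}$ and extracts the same bound from the resulting self-improving inequality.
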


\begin{remark}
When $M$ is a closed Riemannian $n$-manifold and $D$ agrees with the whole manifold~$M$, we recover Theorem~\ref{theo:A} from the introduction.
\end{remark}

\begin{proof}[Proof of Theorem~\ref{theo:main}]
Fix $\rho >0$ small enough so that there exists a CW complex structure on~$D$ with quasi-convex cells of size~$(\rho,\lambda)$ with~$\lambda>1$ close enough to~$1$, \cf~Proposition~\ref{prop:existence}.
As in the proof of Theorem~\ref{theo:split2}, we will assume that $\lambda=1$ for the sake of simplicity.
That is, the quasi-convex $n$-cells of the CW complex structure of~$D$ are flat.
In particular, $D$ is a piecewise flat simplicial $n$-complex.
We argue by induction on the number of quasi-convex $n$-cells in the CW complex structure of~$D$. 

\medskip

Suppose that $D$ is composed of a single $n$-cell.
Every polyhedron lying in~$B^n(6 \rho)$ can be swept out by parallel hyperplanes of volume at most $\vol_{n-1}(B^{n-1}(6 \rho))=\omega_{n-1} 6^{n-1} \rho^{n-1}$, which correspond to the level sets of some PL Morse function.
Now, the volume of~$D$ is greater or equal to $\vol_n(B^n(2\rho))=\omega_n 2^n \rho^n$.
Thus, 
\[
\A(D) \leq 6^{n-1} \omega_{n-1} \, \omega_n^{-\frac{n-1}{n}} \, \vol_n(D)^\frac{n-1}{n}.
\]

\medskip

In the general case, from Theorem~\ref{theo:split2}, there exists an $(n-1)$-chain~$S$ of~$D$ with boundary lying in~$\partial D$ which decomposes~$D$ into two CW subcomplexes~$D_1$ and~$D_2$ with $\vol_n(D_i) \geq \alpha'_n \, \vol_n(D)$ such that
\begin{equation} \label{eq:S}
\vol_{n-1}(S) \leq A'_n \, \vol_n(D)^{\frac{n-1}{n}},
\end{equation}

Both $D_1$ and~$D_2$ have fewer $n$-cells in their CW complex decomposition than~$D$.
Therefore, we can assume that
\[
\frac{\A(D_i)}{\vol_n(D_i)^\frac{n-1}{n}} \leq \frac{\A(D)}{\vol_n(D)^\frac{n-1}{n}}
\]
otherwise the result follows by induction.

Thus, since $\vol_n(D_i) \leq (1-\alpha'_n) \vol_n(D)$, we deduce that
\[
\A(D_i) \leq (1-\alpha'_n)^\frac{n-1}{n} \, \A(D).
\]
Combined with Proposition~\ref{prop:merging} and the bound~\eqref{eq:S}, we obtain
\begin{align*}
\A(D) & \leq \frac{2n}{1-(1-\alpha'_n)^\frac{n-1}{n}} \, \vol_{n-1}(S) \\
& \leq \frac{2n \, A'_n}{1-(1-\alpha'_n)^\frac{n-1}{n}} \, \vol_n(D)^\frac{n-1}{n}
\end{align*}
Hence, the theorem holds with $C_n = \max \left\{\frac{2n  A'_n}{1-(1-\alpha'_n)^\frac{n-1}{n}}, 6^{n-1} \omega_{n-1} \, \omega_n^{-\frac{n-1}{n}} \right\}$.
\end{proof}

Given a closed Riemannian $n$-manifold~$M$, we deduce from the min-max principle of~\cite{DT} the existence of an embedded closed minimal hypersurface in~$M$ with a singular set of Hausdorff dimension at most~$n-8$ and volume at most~$\A(M)$.
Combined with Theorem~\ref{theo:A}, this yields Theorem~\ref{theo:0}. \\

Similarly, Corollary~\ref{coro:decomp} is a consequence of Theorem~\ref{theo:A}.

\begin{proof}[Proof of Corollary~\ref{coro:decomp}]
From Theorem~\ref{theo:A}, a closed Riemannian manifold~$M$ with nonnegative Ricci curvature and dimension~$n \geq 3$ decomposes into two non-necessarily connected domains $D_1$ and~$D_2$ with the same volume such that $\vol_{n-1}(\partial D_i) \leq C_n \, \vol_n(M)^{\frac{n-1}{n}}$.
Slightly perturbing the domains~$D_i$ and connecting their connected components with thin tubes if necessary, we can assume that the domains~$D_i$ are connected.
In dimension two, it follows from~\cite{CC} that any two-sphere with nonnegative curvature can be swept out by a one-parameter family of disjoint simple loops of length at most the length of the shortest closed geodesic.
Since the length of the shortest closed geodesic is bounded in terms of the area of the sphere, we immediately obtain the desired decomposition.
\end{proof}

\section{A curvature-free counter-example}

In this section, we show that the curvature condition in Theorem~\ref{theo:A} cannot be dropped in general.

\medskip

Before going further, we need to briefly review the Almgren-Pitts min-max principle.
For more details, we refer the reader to~\cite{pitts,SS81,CD03,DT}.

Let $M$ be a closed Riemannian $n$-manifold.
Fix $\kk=\Z$ if $M$ is orientable and $\kk=\Z/2\Z$ otherwise.
Denote by~$\ZZ_k(M)$ the $k$-cycle space of~$M$ over~$\kk$, that is, the space of $k$-dimensional integral currents of~$M$ over~$\kk$ with zero boundary.
The mass of a $k$-cycle~$z \in \ZZ_k(M)$ defined as
\[
\M(z) = \sup \left\{ \int_z \omega \mid \omega \mbox{ smooth } \mbox{k-form on } M \mbox{ with } || \omega || \leq 1 \right\}
\]
extends the notion of volume for $k$-submanifolds.

The homotopy groups of the $k$-cycle space endowed with the flat norm topology have been determined by F.~Almgren \cite{alm,pitts}.
More precisely, there is a natural isomorphism between $H_p(\ZZ_k(M);\kk),\{0\})$ and $H_{p+k}(M;\kk)$.
In particular, 
\[
\pi_1(\ZZ_{n-1}(M;\kk),\{0\}) \simeq H_n(M;\kk) \simeq \kk.
\]
This isomorphism allows us to apply the Almgren-Pitts min-max principle to the $(n-1)$-cycle space of~$M$ as follows.

A sweep-out of~$M$ by~$(n-1)$-cycles is defined as a one-parameter family of $(n-1)$-cycles~$(z_t)_{0 \leq t \leq 1}$ starting and ending at the null-cycle which induces a generator of~$\pi_1(\ZZ_{n-1}(M;\kk),\{0\})$.
An example of sweep-out is given by the level sets~$f^{-1}(t)$ of a PL Morse function $f:M \to [0,1]$.

The $(n-1)$-volume width of~$M$ defined in terms of $(n-1)$-cycles is given by the following min-max value
\[
\mathcal{W}_{\ZZ_{n-1}}(M) = \inf_{(z_t)} \sup_{0 \leq t \leq 1} \M(z_t)
\]
where $(z_t)$ runs over the sweep-outs of~$M$.
Clearly, 
\[
\mathcal{W}_{\ZZ_{n-1}}(M) \leq \A(M).
\]

From~\cite{BS10}, Theorem~\ref{theo:A} holds true for every Riemannian two-sphere without any curvature assumption.
It actually holds true for every closed Riemannian surface without any curvature assumption if one allows the multiplicative constant to depend on the genus of the surface (and examples show the multiplicative constant has to depend on the genus).

On the other hand, L.~Guth~\cite[\S5]{guth07} deduced from a construction of~\cite{BI} that every $n$-sphere has a Riemannian metric with unit $n$-volume and $\mathcal{W}_{\ZZ_{n-1}}$ arbitrarily large 
for $n \geq 3$.

The next proposition shows this result extends to every closed manifold of dimension at least three.

\begin{proposition} \label{prop:cex}
Let $M$ be a closed $n$-manifold with $n \geq 3$.
There exists a Riemannian metric on~$M$ with unit $n$-volume and arbitrarily large $(n-1)$-volume width.
In particular, every PL Morse function on~$M$ has a fiber of arbitrarily large $(n-1)$-volume for this metric.
\end{proposition}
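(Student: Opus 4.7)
The plan is to transplant Guth's bad metric on $S^n$ into $M$ via a connect-sum construction. By \cite{BI} and \cite[\S5]{guth07}, for $n \geq 3$ there is a sequence of Riemannian metrics $h_k$ on $S^n$ with unit $n$-volume and $(n-1)$-cycle width $\mathcal{W}_{\ZZ_{n-1}}(S^n, h_k) \to \infty$. Fix a small parameter $\delta > 0$ independent of $k$. I would place any auxiliary metric on $M$ with total volume $\eps \ll 1$, remove a geodesic ball $B_0 \subset M$, remove from $(S^n, h_k)$ a geodesic ball $B_k$ of size $\delta$ with spherical boundary $\Sigma_k$ of $(n-1)$-volume $O(\delta^{n-1})$, and glue along a thin cylindrical neck of negligible volume. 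After a small global rescaling, this produces a Riemannian manifold $(M, g_k)$ diffeomorphic to $M \# S^n = M$, of volume $1$, containing an embedded, almost-isometric copy $U_k$ of $(S^n, h_k) \setminus B_k$ with boundary $\Sigma_k$.

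Next, I would show that every sweep-out $(z_t)_{0 \leq t \leq 1}$ of $(M, g_k)$ by $(n-1)$-cycles induces a sweep-out $(\tilde z_t)$ of $(S^n, h_k)$ of essentially the same maximal mass. After a generic perturbation so that each $z_t$ is transverse to $\Sigma_k$, let $z_t^U$ denote the part of $z_t$ lying in $U_k$: this is an $(n-1)$-chain in $U_k$ whose boundary is an $(n-2)$-cycle $c_t$ supported on $\Sigma_k$. Since $\Sigma_k$ is an $(n-1)$-sphere, $c_t$ bounds inside $\Sigma_k$ itself by an $(n-1)$-chain $d_t \subset \Sigma_k$ of mass at most $\tfrac{1}{2}\vol_{n-1}(\Sigma_k) \leq C \delta^{n-1}$. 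I would set
\[
\tilde z_t := z_t^U + d_t,
\]
regarded as an $(n-1)$-cycle in $S^n = U_k \cup_{\Sigma_k} B_k$; it satisfies $\mass(\tilde z_t) \leq \mass(z_t) + C \delta^{n-1}$.

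The key step, and the one I expect to be the main obstacle to formalize at the level of flat chains, is checking that $(\tilde z_t)$ represents a generator of $\pi_1(\ZZ_{n-1}(S^n;\kk),\{0\})$. I would do this through an intersection-number characterization. Via the Almgren isomorphism $\pi_1(\ZZ_{n-1}(X;\kk),\{0\}) \simeq H_n(X;\kk) \simeq \kk$, a loop of $(n-1)$-cycles in $X$ is a generator precisely when, for a generic point $p \in X$, the signed count of times $t \in [0,1]$ with $p \in \zeta_t$ equals $\pm 1$. Choose such a generic $p$ in the interior of $U_k$, away from $\Sigma_k$. Since $(z_t)$ is a sweep-out of $M$, its signed intersection count with $p$ equals $\pm 1$. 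The caps $d_t$ lie entirely in $\Sigma_k$ and hence never contain $p$, so the signed count of $(\tilde z_t)$ at $p$ coincides with that of $(z_t^U)$, which in turn equals that of $(z_t)$; hence $\pm 1$. Thus $(\tilde z_t)$ is a genuine sweep-out of $(S^n, h_k)$.

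Taking the infimum over all sweep-outs $(z_t)$ of $(M, g_k)$ then yields
\[
\mathcal{W}_{\ZZ_{n-1}}(S^n, h_k) \leq \mathcal{W}_{\ZZ_{n-1}}(M, g_k) + C \delta^{n-1} \leq \A(M, g_k) + C \delta^{n-1},
\]
the last step using that every PL Morse function on $(M, g_k)$ gives a sweep-out in the Almgren-Pitts sense. Since the left-hand side tends to infinity with $k$ while $C\delta^{n-1}$ is fixed, $\A(M, g_k) \to \infty$; in particular, for $k$ large every PL Morse function on $(M, g_k)$ must have a fiber of arbitrarily large $(n-1)$-volume, which is exactly the claim of the proposition.
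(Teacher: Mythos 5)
Your overall strategy---transplant the Burago--Ivanov/Guth sphere into $M$ at negligible volume cost and show that any sweep-out of $M$ induces a sweep-out of the bad sphere of essentially the same maximal mass---is exactly the paper's strategy, and your metric construction is essentially the paper's. But the mechanism you use to transfer the sweep-out (restrict to $U_k$, then cap the slice on $\Sigma_k$) has genuine gaps, precisely at the points you flag. First, for $(\tilde z_t)$ to define an element of $\pi_1(\ZZ_{n-1}(S^n;\kk),\{0\})$ the capped family must be a continuous loop in the flat topology; the restriction $z_t \llcorner U_k$ is not continuous in $t$ in general (mass of $z_t$ may concentrate on $\Sigma_k$, and flat convergence does not pass to restrictions without choosing good slicing radii), and the caps $d_t$ must in addition be chosen to depend continuously on the slices $c_t$, which vary only measurably. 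Second, your criterion for being a generator---the signed count of parameters $t$ with $p \in \tilde z_t$---is not a well-defined invariant of a flat-continuous family of integral cycles (a point can lie on $z_t$ for a whole interval of times, and supports are unstable under flat limits); Almgren's isomorphism is computed via isoperimetric fillings of consecutive differences, not point counts, so this key step is a heuristic rather than a proof. Third, the bound $\M(d_t) \leq \tfrac{1}{2}\vol_{n-1}(\Sigma_k)$ is only correct mod $2$: with $\kk=\Z$ (the paper's convention when $M$ is orientable) the slice $c_t$ may have high multiplicity, in which case every integral filling inside $\Sigma_k$ has mass equal to a large multiple of $\vol_{n-1}(\Sigma_k)$, and your uniform error term $C\delta^{n-1}$ fails.

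The paper's proof avoids all three difficulties by not cutting at all: it uses a distance-nonincreasing degree-one map $\varphi : M \to S^n$ that collapses the complement of the bad region, and pushes the sweep-out forward. Push-forward under a Lipschitz map is automatically continuous on the cycle space and mass-nonincreasing, and since $\deg \varphi = 1$ the image family is again a generator of $\pi_1(\ZZ_{n-1}(S^n;\kk),\{0\})$ by naturality of Almgren's isomorphism, with no error term at all. If you replace your cut-and-cap step by push-forward under the collapse map $M \to M/(M\setminus U_k) \cong S^n$ (made $1$-Lipschitz by your choice of metric), your argument becomes the paper's and the technical issues disappear.
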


\begin{proof}
Consider an $n$-sphere~$S^n$ endowed with a Riemannian metric with unit $n$-volume and $\mathcal{W}_{\ZZ_{n-1}}(S^n)$ arbitrarily large. 
Given a domain~$D$ of~$M$ diffeomorphic to a ball, we consider a degree one map $\varphi: M \to S^n$ taking $D$ to some point~$p \in S^n$ and $M \setminus D$ to $S^n \setminus \{p\}$.
We also define a metric on~$M$ such that the volume of $M \setminus D$ is arbitrarily small and the restriction of~$\varphi$ to~$D$ is an isometry onto~$S^n \setminus \{ p \}$.
Now, since the map~$\varphi$ has degree one, the image by~$\varphi$ of a sweep-out~$(z_t)$ of~$M$ by $(n-1)$-cycles induces a sweep-out of~$S^n$.
From the geometry of~$S^n$, one of the $(n-1)$-cycles of this sweep-out of~$S^n$ has an arbitrarily large mass.
As the map~$\varphi$ is distance, and so mass, nonincreasing, the same holds for the $(n-1)$-cycles~$(z_t)$.
This shows that $\mathcal{W}_{\ZZ_{n-1}}(M)$ can be arbitrarily large while $M$ has unit $n$-volume.
\end{proof}

\begin{remark}
Working with level sets does not allow us to conclude in the proof of Proposition~\ref{prop:cex}.
In order to connect $M$ to the sphere~$S^n$, we need the flexibility provided by the general Almgren-Pitts min-max principle defined on the $(n-1)$-cycle space.
\end{remark}

\end{document}